\def\Vect{\hbox{-}\mathsf{Vect}}
\def\Mod{\hbox{-}\mathsf{Mod}}
\def\un{\mathbf{1}}
\def\mpn{\medskip\par\noindent}
\def\dom{\backslash}
\newcommand{\gMod}[1]{#1{\hbox{-}\mathsf{Mod}}}
\def\zero{\{0\}}
\def\sou{\underline}
\definecolor{ao(english)}{rgb}{0.0, 0.5, 0.0}
\definecolor{brickred}{rgb}{0.8, 0.25, 0.33}
\definecolor{burntorange}{rgb}{0.8, 0.33, 0.0}
\definecolor{beaver}{rgb}{0.62, 0.51, 0.44}
\definecolor{brown(traditional)}{rgb}{0.59, 0.29, 0.0}
\definecolor{ao(english)}{rgb}{0.0, 0.5, 0.0}
\definecolor{verde}{rgb}{0.12, 0.8, 0.17}
\def\rouge{\color{red}}
\def\sou{\underline}
\newcommand{\rac}{\mathbb Q}
\newcommand{\cp}{\mathcal P}
\newcommand{\Ind}{\mathrm{Ind}}
\newcommand{\Res}{\mathrm{Res}}
\newcommand{\Def}{\mathrm{Def}}
\newcommand{\Inf}{\mathrm{Inf}}
\newcommand{\Iso}{\mathrm{Iso}}
\newcommand{\Hom}{\mathrm{Hom}}
\newcommand{\End}{\mathrm{End}}
\newcommand{\Id}{\mathrm{Id}}
\newcommand{\Rad}{\mathrm{Rad}}
\theoremstyle{plain}
\newtheorem{teo}{Theorem}[section]
\newtheorem*{teo-non}{Theorem}
\newtheorem{prop}[teo]{Proposition}
\newtheorem{coro}[teo]{Corollary}
\newtheorem{lema}[teo]{Lemma}
\theoremstyle{definition}
\newtheorem{defi}[teo]{Definition}
\newtheorem{nota}[teo]{Notation}
\theoremstyle{remark}
\newtheorem{ejem}[teo]{Example}
\newtheorem{rem}[teo]{Remark}
\def\CD{\mathcal{D}}
\def\CP{\mathcal{P}}
\def\C{\mathbb{C}}
\def\F{\mathbb{F}}
\def\Q{\mathbb{Q}}
\def\Z{\mathbb{Z}}
\def\mpn{\medskip\par\noindent}
\def\op{^{\mathrm{op}}}
\def\endpf{~\leaders\hbox to 1em{\hss\  \hss}\hfill~\raisebox{.5ex}{\framebox[1ex]{}}\smallskip\par}
\author{Serge Bouc and Nadia Romero\\
}
\title{Green fields}
\date{}
\begin{document}

\maketitle
\begin{abstract}
We introduce \textit{Green fields}, as commutative Green biset functors with no non-trivial ideals. We state some of their properties and give examples of known Green biset functors which are Green fields. Among the properties, we prove some criterions ensuring that a Green field is semisimple. Finally, we describe a type of Green field for which its category of modules is equivalent to a category of vector spaces over a field. \vspace{1ex}\\
{\bf Keywords:} biset functor, Green functor, functor category, field.\\
{\bf AMS MSC (2010):} 16Y99, 18D10, 18D15, 20J15.
\end{abstract}
\section{Introduction}
A Green biset functor can be defined as a ring object in the tensor category  of biset functors (\cite{biset} Section 8.5), so it is natural to extend the usual definitions for rings to the realm of Green biset functors. In particular, we have a notion of left (or right, or two-sided) ideal of a Green biset functor. In~\cite{centros}, we consider two analogues of the center of a ring, and we consider {\em commutative} Green biset functors. For such functors, the notions of left, right, and two-sided ideal coincide. In the present paper, we study commutative Green biset functors with no non-trivial ideals, and we call these functors {\em Green fields} (this terminology first appeared in \cite{relative-B-groups} Remark 7.21). This definition allows us to continue the search for ways  of decomposing the modules over a Green biset functor into simpler factors, following the spirit of what is done in Section~5 of \cite{centros}, but in a different fashion. In the case of a Green field $A$, the question is, naturally, if $A$-modules behave as vector spaces over a field. As we will see in Section~3, the question of whether a module over a Green field is always semisimple or not is a complex one and we must, for the moment, leave it open. Nonetheless, we prove some criterions given conditions for this to happen. On the other hand, in Section~4, we introduce {\em strict Green fields}, as Green fields fulfilling one extra condition, and we prove that the category of modules over a strict Green field $A$ is equivalent to a category of vector spaces over the field $A(\un)$.

The paper is organized as follows: in Section 2, we recall the basic definitions on Green biset functors and modules over them. In Section 3, we introduce Green fields, state their first properties and enumerate some examples. We also introduce bilinear forms which allow for a characterization of Green fields among all commutative Green biset functors. Next, we give criterions ensuring that a Green field is semisimple. Finally, in Section~4, we define strict Green fields, as those Green fields satisfying a list of six equivalent conditions. We conclude with some examples of strict and non strict Green fields.


\section{Preliminaries}

We assume that the reader is familiar with the definitions of biset category, biset functor, Green biset functor and module over a Green biset functor. Examples and some properties of Green biset functors and their modules can be found in sections 1.1  and 1.2 of \cite{centros}. As in that article, the Green biset functors we consider here are defined on a class $\mathcal{D}$ of finite groups, closed under subquotients and direct products. The biset category is defined over a commutative unital ring $k$. Unless otherwise stated, when we write \textit{Green biset functor} we will mean   \textit{Green $\CD$-biset functor over $k$}, for some $\mathcal{D}$ and~$k$. Similarly, when we speak of the evaluation $A(G)$ of a Green biset functor at a finite group $G$, it will be understood that $G$ is in the class $\CD$. The Burnside functor will be denoted by $kB$, as usual, and if $\mathbb{F}$ is a field of characteristic $0$, the functor of linear $\mathbb{F}$-representations will be denoted by $kR_\mathbb{F}$.

The trivial group will be denoted by $\un$.  For a finite group $G$, we denote by $\Delta(G)$ the diagonal subgroup of $G\times G$, that is $\Delta(G)=\{(g,g)\mid g\in G\}$.

We recall the definition of the category $\mathcal{P}_A$, associated to the Green biset functor~$A$.

\begin{defi} \label{PA}Let $A$ be a Green  biset functor with identity element $\varepsilon\in A(\un)$. The category $\cp_A$ is defined in the following way:
\begin{itemize}
 \item The objects of $\cp_A$ are all finite groups {in $\CD$}.
 \item If $G$ and $H$ are groups {in $\CD$}, then ${\Hom}_{\cp_A}(H,\, G)=A(G\times H)$.
 \item Let $H,\, G$ and $K$ be groups {in $\CD$}. The composition of $\beta\in A(H\times G)$ and $\alpha\in A(G\times K)$ in $\cp_A$ is the following:
\begin{displaymath}
\beta \circ \alpha = A\left(\Def^{\,H\times\Delta(G)\times K}_{H\times K}\circ\Res^{H\times G\times G\times K}_{H\times\Delta(G)\times K}\right)(\beta\times\alpha).
\end{displaymath}
\item For a group $G$ in $\CD$, the identity morphism $\varepsilon_G$ of $G$ in $\cp_A$ is $A(\Ind_{\Delta(G)}^{G\times G}\circ\nolinebreak \Inf_\un^{\,\Delta (G)})(\varepsilon)$.
\end{itemize}
\end{defi}

Recall that if $A$ is a Green biset functor, then for any group $G$ in $\CD$, the evaluation $A(G)$ is a ring for the ``dot'' product defined by
$$\forall \alpha,\beta\in A(G),\;\alpha\cdot\beta=A(\Iso_{\Delta(G)}^G)A(\Res_{\Delta(G)}^{G\times G})(\alpha\times\beta).$$
\begin{rem}\label{trivial group} In the special case $G=\un$, after identifying $G\times G$ with $G$, we then have three products $A(G)\times A(G)\to A(G)$, namely $(\alpha,\beta)\mapsto \alpha\times\beta$, $(\alpha,\beta)\mapsto \alpha\circ\beta$, and $(\alpha,\beta)\mapsto \alpha\cdot\beta$. One verifies easily that these products coincide.
\end{rem}

For a morphism $\alpha\in A(H\times G)$ from $G$ to $H$ in the category $\CP_A$, we denote by $\alpha\op$ the element of $A(G\times H)$ - i.e. the morphism from $H$ to $G$ - defined by
$$\alpha\op=A(\Iso_{H\times G}^{G\times H})(\alpha),$$
where $\Iso_{H\times G}^{G\times H}$ is the group isomorphism $H\times G\to G\times H$ swapping the components. The assignment sending a finite group to itself, and a morphism $\alpha$ to $\alpha\op$ is not a functor from $\CP_A$ to the opposite category in general, but it is if $A$ is commutative.
A \textit{commutative} Green biset functor is a Green biset functor $A$ which is equal to its commutant (Definition 21 in \cite{centros}), which means that the product $\times$ of $A$ is commutative in the following sense:
$$\forall G,H\in \CD,\forall \alpha\in A(G),\forall \beta\in A(H),\;\alpha\times\beta=A(\Iso^{G\times H}_{H\times G})(\beta\times\alpha).$$
This is also equivalent to saying that the ring $\big(A(G),\cdot\big)$ is commutative for every group~$G$ in~$\CD$.
\begin{lema} \label{opposite}Let $A$ be a commutative Green biset functor, and $H,K,L\in \CD$. Then for any $\beta\in A(L\times H)$ and $\alpha\in A(H\times K)$
$$(\beta\circ\alpha)\op=\alpha\op\circ\beta\op.$$
The assigment sending $K\in\CD$ to itself and $\alpha\in A(H\times K)$ to $\alpha\op\in A(K\times H)$ is an equivalence of categories from $\CP_A$ to the opposite category.
\end{lema}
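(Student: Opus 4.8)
The plan is to verify the identity $(\beta\circ\alpha)\op=\alpha\op\circ\beta\op$ by unwinding both sides to expressions of the form $A(\phi)(\text{something})$ applied to $\beta\times\alpha$ (or $\alpha\times\beta$), and then checking that the relevant group homomorphisms agree up to the swap isomorphisms, using commutativity of $A$ exactly once to reconcile $\beta\times\alpha$ with $\alpha\times\beta$. Concretely, write $\beta\circ\alpha=A(\Def^{L\times\Delta(H)\times K}_{L\times K}\circ\Res^{L\times H\times H\times K}_{L\times\Delta(H)\times K})(\beta\times\alpha)$, so that $(\beta\circ\alpha)\op=A(\Iso^{L\times K}_{K\times L})A(\Def\circ\Res)(\beta\times\alpha)$. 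On the other side, $\alpha\op\circ\beta\op=A(\Def^{K\times\Delta(H)\times L}_{K\times L}\circ\Res^{K\times H\times H\times L}_{K\times\Delta(H)\times L})(\alpha\op\times\beta\op)$, and $\alpha\op\times\beta\op=A(\Iso^{H\times K}_{K\times H}\times\Iso^{L\times H}_{H\times L})(\alpha\times\beta)$.

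The key step is then a purely combinatorial comparison of two homomorphisms from a product of the four groups $L,H,H,K$ (in some order) to $K\times L$: the composite $\Iso\circ\Def\circ\Res$ coming from the left-hand side, and the composite $\Def\circ\Res\circ(\Iso\times\Iso)\circ\Iso$ coming from the right-hand side, where the final $\Iso$ encodes the reordering needed to match the domains and the fact that $\times$ is being fed $\alpha\times\beta$ rather than $\beta\times\alpha$. Both sides first restrict to the subgroup where the two middle $H$-coordinates are equal, then deflate that diagonal copy of $H$, and finally permute the two surviving factors $L$ and $K$; I expect these to coincide on the nose after inserting one application of the commutativity axiom for $A$ (to turn $\beta\times\alpha$ into $A(\Iso)(\alpha\times\beta)$) and using functoriality of $A$ to compose all the structural maps. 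The main obstacle is purely bookkeeping: keeping track of the many swap isomorphisms on four-fold products and making sure the diagonal subgroup $\Delta(H)$ is embedded in the matching pair of coordinates on both sides; no conceptual difficulty is expected, but the index juggling is delicate.

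For the second assertion, I would argue as follows. The assignment $\mathcal{F}\colon K\mapsto K$, $\alpha\mapsto\alpha\op$ is a functor $\CP_A\to\CP_A\op$: it sends $\varepsilon_G$ to $\varepsilon_G$ because $\Iso^{G\times G}_{G\times G}$ applied via $A(\Ind_{\Delta(G)}^{G\times G}\circ\Inf_\un^{\Delta(G)})(\varepsilon)$ returns the same element (the diagonal and the inflation from the trivial group are symmetric under the swap), and it reverses composition by the first part of the lemma. It is a bijection on objects, and on morphism sets $\Hom_{\CP_A}(H,G)=A(G\times H)\to A(H\times G)=\Hom_{\CP_A\op}(H,G)$ it is bijective with inverse given by $\alpha\mapsto A(\Iso^{G\times H}_{H\times G})(\alpha)$, since $\Iso^{H\times G}_{G\times H}\circ\Iso^{G\times H}_{H\times G}=\Id$ and $A$ is a functor. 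Hence $\mathcal{F}$ is a fully faithful, bijective-on-objects functor, i.e. an isomorphism of categories, and in particular an equivalence $\CP_A\xrightarrow{\ \sim\ }\CP_A\op$.
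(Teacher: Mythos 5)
Your plan matches the paper's: unwind both sides of $(\beta\circ\alpha)\op=\alpha\op\circ\beta\op$ using the definition of composition in $\CP_A$, absorb the swap isomorphisms into the structural maps by functoriality, and invoke the commutativity axiom exactly once to pass from $\beta\times\alpha$ to $\alpha\times\beta$. Your treatment of the second assertion is also the paper's argument, spelled out slightly more explicitly: $(-)\op$ is involutive, preserves identities (because $\Delta(G)$ is invariant under the swap of $G\times G$), and reverses composition by the first part, hence gives an isomorphism $\CP_A\xrightarrow{\ \sim\ }\CP_A\op$.

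However, you stop exactly where the content of the first assertion lies. You set up the two expressions correctly and then write that you ``expect these to coincide'' after ``delicate index juggling'' — but that index juggling \emph{is} the proof; there is nothing else to this lemma. To close the gap you must actually do what the paper does: (i) identify the permutation isomorphism expressing $\alpha\op\times\beta\op=A(\Iso_{HKLH}^{KHHL})(\alpha\times\beta)$, (ii) identify the permutation $\Iso_{LHHK}^{HKLH}$ furnished by commutativity of $A$, (iii) compose them into a single $\Iso_{LHHK}^{KHHL}$, and (iv) commute this isomorphism past $\Res_{K\Delta(H)L}^{KHHL}$ and $\Def_{KL}^{K\Delta(H)L}$ — using $\Res\circ\Iso=\Iso\circ\Res$ and $\Def\circ\Iso=\Iso\circ\Def$ along matching subgroups — so as to extract $\Iso_{LK}^{KL}$ applied to the expression defining $\beta\circ\alpha$. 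As written, the central identity is asserted rather than verified, and since the second assertion is deduced from the first, the lemma as a whole is not yet proved.
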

\begin{proof}
We compute $\alpha\op\circ\beta\op$:
\begin{align*}
\alpha\op\circ\beta\op&=A(\Def^{K\Delta(H)L}_{KL}\Res_{K\Delta(H)L}^{KHHL})(\alpha\op\times\beta\op)\\
&=A(\Def^{K\Delta(H)L}_{KL}\Res_{K\Delta(H)L}^{KHHL})\big(A(\Iso_{HK}^{KH})(\alpha)\times A(\Iso_{LH}^{HL})(\beta)\big)\\
&=A(\Def^{K\Delta(H)L}_{KL}\Res_{K\Delta(H)L}^{KHHL}\Iso_{HKLH}^{KHHL})(\alpha\times\beta),\\
\end{align*}
where $\Iso_{HKLH}^{KHHL}$ sends $(h,k,l,h')\in HKLH$ to $(k,h,h',l)\in KHHL$. Moreover since $A$ is commutative, we have
$$\alpha\times\beta=A(\Iso_{LHHK}^{HKLH})(\beta\times\alpha),$$
where $\Iso_{LHHK}^{HKLH}$ maps $(l,h,h',k)$ to $(h',k,l,h)$. It follows that
$$
\alpha\op\circ\beta\op=A(\Def^{K\Delta(H)L}_{KL}\Res_{K\Delta(H)L}^{KHHL}\Iso_{LHHK}^{KHHL})(\beta\times\alpha),$$
where $\Iso_{LHHK}^{KHHL}$ maps $(l,h,h',k)$ to $(k,h',h,l)$. Hence
\begin{align*}
\alpha\op\circ\beta\op&=A(\Def^{K\Delta(H)L}_{KL}\Iso_{L\Delta(H)K}^{K\Delta(H)L}\Res_{L\Delta(H)K}^{LHHK})(\beta\times\alpha)\\
&=A(\Iso_{LK}^{KL}\Def_{LK}^{L\Delta(H)K}\Res_{L\Delta(H)K}^{LHHK})(\beta\times\alpha)\\
&=A(\Iso_{LK}^{KL})(\beta\circ\alpha)\\
&=(\beta\circ\alpha)\op,
\end{align*}
as was to be shown. The last assertion follows from the fact that, with a slight abuse of language, the map $\alpha\mapsto \alpha\op$ is obviously involutive, and sends the identity morphism of any object to itself.
\end{proof}

For a Green biset functor $A$, an $A$-module is defined as a biset functor $M$, together with  bilinear products $A(G)\times M(H)\rightarrow M(G\times H)$ that satisfy natural conditions of associativity, identity element and functoriality.
 It is well known (see \cite{biset}, Chapter 8 or \cite{lachica}) 
 that this definition is equivalent to defining an $A$-module  as  a $k$-linear functor from the category $\cp_A$ to $\gMod{k}$. We denote the category of $A$-modules by  $\gMod{A}$. Important objects in $\gMod{A}$ are the \textit{shifted functors}  (also called {\em shifted modules}) $M_L$. If $L$ is a fixed finite group, the functor $M_L$ is defined as $M(G\times L)$ in a group $G\in \mathcal{D}$ and as $M(\alpha \times L)$ in an arrow $\alpha\in A(H\times G)$, for more details see Definition 10 in \cite{centros}. Of course, $A_L$ is an example of this construction but in this case we can say a little bit more. If $L$ is a group in $\mathcal{D}$ and $\mathcal{D}'$ be a subclass of $\mathcal{D}$  possibly not containing $L$, the shifted functor $A_L$ 
is a Green $\mathcal{D}'$-biset functor. 

When dealing with the simple objects of $\gMod{A}$, the following notions are crucial.

\begin{defi}
Let $A$ be a Green biset functor. For a group $H\in \mathcal{D}$, the \textit{essential algebra}, $\widehat{A}(H)$, of $A$ on $H$, is the quotient of $A(H\times H)$ over the ideal generated by elements of the form $a\circ b$, where $a$ is in $A(H\times K)$, $b$ is in $A(K\times H)$ and $K$ runs over the groups in $\mathcal{D}$ of order smaller than $|H|$.
\end{defi}

The following functors can  be defined in more general settings, we recall the definitions in the context of $A$-modules. 

\begin{defi}
Let $A$ be a Green biset functor, $H$ a group in $\mathcal{D}$ and $V$ an $A(H\times H)$-module, the $A$-module $L_{H,\, V}$ is defined in $G\in \mathcal{D}$ as
\begin{displaymath}
L_{H,\, V}(G)=A(G\times H)\otimes_{A(H\times H)}V,
\end{displaymath}
and in an obvious way in arrows.
\end{defi}

If $V$ is a simple $A(H\times H)$-module, then  $L_{H,\,V}$ has a unique maximal subfunctor $J_{H,\,V}$, which in each evaluation is equal to 
\begin{displaymath}
J_{H,\, V}(G)=\left\{\sum_{i=1}^na_i\otimes v_i\in L_{H,\, V}(G)\mid\sum_{i=1}^n(b\circ a_i) v_i=0\ \forall b\in A(H\times G) \right\}.
\end{displaymath}

The quotient $S_{H,\, V}= L_{H,\,V}/J_{H,\,V}$ is a simple $A$-module such that $S_{H,\, V}(H)=V$. On the other hand, if $S$ is a simple $A$-module and $H\in \mathcal{D}$ is such that $S(H)\neq \zero$, then  taking $V=S(H)$ gives $S\cong S_{H,\,V}$.

Finally, we recall the following easy lemma, which will be used in Section~\ref{estrictos}.
\begin{lema}
\label{section}
 Let $\mathcal{E}$ and $\mathcal{F}$ be categories, let $F_1$ and $F_2$ be functors $\mathcal{E}\to\mathcal{F}$, and let $\Theta:F_1\to F_2$ be a natural transformation. Let $f:X\to Y$ and $g:Y\to X$ be morphisms in $\mathcal{E}$ such that $f\circ g=\Id_Y$. Then if $\Theta_X:F_1(X)\to F_2(X)$ is an isomorphism, so is $\Theta_Y:F_1(Y)\to F_2(Y)$.
\end{lema}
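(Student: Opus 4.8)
The plan is to exploit the retraction $f\circ g=\Id_Y$ to build an explicit inverse for $\Theta_Y$ out of the given inverse $\Theta_X^{-1}$ of $\Theta_X$, using naturality of $\Theta$ with respect to both $f$ and $g$. First I would record the two naturality squares: for $f\colon X\to Y$ we have $F_2(f)\circ\Theta_X=\Theta_Y\circ F_1(f)$, and for $g\colon Y\to X$ we have $F_2(g)\circ\Theta_Y=\Theta_X\circ F_1(g)$. The candidate inverse for $\Theta_Y$ is
$$\Psi:=F_1(f)\circ\Theta_X^{-1}\circ F_2(g)\colon F_2(Y)\to F_1(Y).$$

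Next I would check that $\Psi$ is a two-sided inverse. For $\Psi\circ\Theta_Y$, substitute the naturality identity $F_2(g)\circ\Theta_Y=\Theta_X\circ F_1(g)$ to get $\Psi\circ\Theta_Y=F_1(f)\circ\Theta_X^{-1}\circ\Theta_X\circ F_1(g)=F_1(f)\circ F_1(g)=F_1(f\circ g)=F_1(\Id_Y)=\Id_{F_1(Y)}$, where functoriality of $F_1$ and the hypothesis $f\circ g=\Id_Y$ are used. For $\Theta_Y\circ\Psi$, first rewrite $\Theta_Y\circ F_1(f)=F_2(f)\circ\Theta_X$ (naturality for $f$), so $\Theta_Y\circ\Psi=F_2(f)\circ\Theta_X\circ\Theta_X^{-1}\circ F_2(g)=F_2(f)\circ F_2(g)=F_2(f\circ g)=F_2(\Id_Y)=\Id_{F_2(Y)}$, again by functoriality (of $F_2$) and the retraction hypothesis. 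Hence $\Theta_Y$ is an isomorphism with inverse $\Psi$.

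There is no real obstacle here; the only point requiring a little care is applying the naturality squares in the correct order (for the left inverse one uses the square of $g$, for the right inverse the square of $f$) and remembering that the retraction hypothesis is only $f\circ g=\Id_Y$, so $F_1(g)\circ F_1(f)$ and $F_2(g)\circ F_2(f)$ are \emph{not} assumed to be identities and must never be invoked. Everything else is formal diagram chasing, valid in any pair of categories $\mathcal{E},\mathcal{F}$.
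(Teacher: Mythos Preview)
Your proof is correct: the candidate $\Psi=F_1(f)\circ\Theta_X^{-1}\circ F_2(g)$ is verified to be a two-sided inverse of $\Theta_Y$ exactly as you describe, and you are careful not to use the non-existent identity $g\circ f=\Id_X$. The paper itself does not supply a proof for this lemma (it is stated as an ``easy lemma'' and left to the reader), so your argument is the standard explicit verification one would expect.
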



\section{Definitions and first properties}


\begin{defi}  An $A$-module is called {\em semisimple} if it is the sum of its simple $A$-submodules.
A Green biset functor $A$ is called semisimple  if all $A$-modules are semisimple.
\end{defi}
\begin{lema}\label{lemma-semisimple}
A Green biset functor $A$ is semisimple if and only if for every group $H$, the $A$-module $A_H$ is  semisimple. 
\end{lema}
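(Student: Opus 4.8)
The forward direction is immediate: if $A$ is semisimple, then every $A$-module is a sum of its simple submodules, in particular each $A_H$ is. So the whole content is the converse, and the plan is to show that the class of semisimple $A$-modules is closed under the operations needed to build an arbitrary $A$-module out of the $A_H$'s. First I would recall the standard fact that an $A$-module $M$ is semisimple (in the sense defined here, a sum of simple submodules) if and only if every submodule of $M$ is a direct summand, equivalently if and only if $M$ is a direct sum of simple submodules; this is the usual argument valid in any abelian category with enough structure, and the category $\gMod{A}$ of $k$-linear functors $\CP_A\to\gMod{k}$ is a Grothendieck category, so it applies. From this it follows formally that a quotient of a semisimple module is semisimple, a submodule of a semisimple module is semisimple, and an arbitrary direct sum of semisimple modules is semisimple.

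Next I would observe that the shifted representable modules $A_H$ generate $\gMod{A}$: for any $A$-module $M$ and any group $G\in\CD$ with $m\in M(G)$, there is a morphism of $A$-modules $A_G\to M$ hitting $m$ (this is the Yoneda-type statement for the category $\CP_A$, using that $A_G = A(-\times G)$ is, up to the shift, the representable functor at $G$, and $M_G(\un)=M(G)$ contains $m$; more precisely one uses the evaluation/co-unit $A(\varepsilon_G)$ and the $A$-module structure). Hence every $A$-module $M$ is a quotient of a direct sum $\bigoplus_{i} A_{G_i}$ of shifted modules.

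Now assume each $A_H$ is semisimple. Then $\bigoplus_i A_{G_i}$ is semisimple by closure under direct sums, and therefore its quotient $M$ is semisimple by closure under quotients. Since $M$ was arbitrary, $A$ is semisimple.

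The one genuinely delicate point — and the step I expect to be the main obstacle — is the "generation" claim: that every $A$-module is an epimorphic image of a direct sum of shifted modules $A_{G_i}$. One has to be careful that $A_G$ really is projective/representable in the right sense. Concretely, for fixed $G$, the functor $M\mapsto M(G)$ on $\gMod{A}$ is represented by $A_G$ (this is essentially Yoneda for $\CP_A$, combined with the identification in the excerpt of $A$-modules with $k$-linear functors $\CP_A\to\gMod{k}$, noting $\Hom_{\CP_A}(\un,G)=A(G)$ and the shift construction); so a choice of generators for each $M(G)$ as a $k$-module, over all $G\in\CD$, yields the required epimorphism from a direct sum of copies of the $A_G$. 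Once this is in place, the rest is the purely formal closure properties of semisimplicity recalled above, and the lemma follows.
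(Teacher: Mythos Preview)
Your proposal is correct and follows essentially the same route as the paper's proof: the forward direction is trivial, and for the converse one observes that the $A_H$ are the representable functors of $\CP_A$, so every $A$-module is a quotient of a direct sum of them, and semisimplicity passes to direct sums and quotients. The paper compresses all of this into three sentences, taking the closure properties of semisimplicity and the Yoneda/generation statement for granted, whereas you spell out both; but the underlying argument is identical.
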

\begin{proof} If $A$ is semisimple, then in particular every $A$-module of the form $A_H$ is semisimple. Conversely, suppose that every $A_H$ is semisimple. As these modules can be view as the representable functors of the category $\mathcal{P}_A$, any $A$-module is a quotient of a direct sum of functors of the form $A_{H}$. Hence any $A$-module is semisimple.
\end{proof}
\begin{defi}
\begin{itemize}
\item[i)] A Green biset functor is called {\em simple} if its only two-sided ideals are itself and the zero ideal.
\item[ii)] A simple commutative Green biset functor is called a {\em Green field}.
\end{itemize}
\end{defi}

Observe that a simple Green biset functor need not be semisimple. This can be viewed by considering the case where $\mathcal{D}$ consists only of trivial groups. Then a Green biset functor over $\mathcal{D}$ is just a ring, and there are well-known examples of simple rings which are not semisimple (see e.g.~\cite{lam}, page 43, Example before Theorem 3.15).


\begin{ejem}\label{examples of Green fields}
The following Green biset functors are Green fields.
\begin{itemize}
\item If $k$ is a field of characteristic 0, the functor $kR_\rac$. Clearly it is a commutative Green biset functor and it is simple by Proposition 4.4.8 in \cite{bGfun}.
\item If $k$ is a field, the functor $kR_\mathbb{C}$.  It is clearly commutative,  and it is easy to see that the proof of Proposition 4.3 in \cite{lachica}, regarding the simplicity, holds for this functor.
\item Let $k$ be a field of characteristic 0 and $p$ be a prime number. For a finite $p$-group~$K$, let $kB^{(p)}_K$ denote the Burnside functor shifted by $K$, defined on the class $\CD$ of finite $p$-groups. It follows from~Theorem~45 of~\cite{centros} that a decomposition $1=\sum_ie_i$ of the idendity element of $kB^{(p)}_K(\un)\cong kB(K)$ as a sum of orthogonal idempotents yields a corresponding decomposition of the category $kB^{(p)}_K\Mod$ as a product of categories of modules over smaller Green biset functors $e_ikB^{(p)}_K$. Now, the primitive idempotents $e_H^K$ of $kB(K)$ are indexed by the subgroups $H$ of $K$, up to conjugation. In particular, for $H=K$, we get a Green biset functor $e_K^KkB_K^{(p)}$. It is shown in~Section~7 of~\cite{relative-B-groups} that when $K$ is not cyclic, the functor $e_K^KkB_K^{(p)}$ is a Green field.
\end{itemize}
\end{ejem}

\begin{rem}
Let $A$ be a commutative  Green biset functor. Since in this case, two-sided ideals coincide with left ideals, i.e. $A$-submodules of $A$,  then $A$ is a Green field if and only if $A$ is a simple $A$-module. 
Nevertheless, $A$ may not be, up to isomorphism, the unique simple $A$-module. For example, the functor $A=kR_{\rac}$ for a field $k$ of characteristic $0$, is a Green field, but there are infinitely many non isomorphic simple $A$-modules (see \cite{barker} or \cite{lachica}).
\end{rem}

\begin{lema}
\label{invertible}
Let $A$ be a commutative Green biset functor. Then $A$ is a Green field if and only if for any finite group $H\in\CD$, and any $a\neq 0$ in $A(H)=A(H\times \un)$, there exists $b\in A(H)=A(\un\times H)$ such that $b\circ a=\varepsilon_A$.
\end{lema}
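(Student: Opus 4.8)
The plan is to characterize the condition ``$A$ is a Green field'' directly in terms of the ideal generated by a single element. Recall that, $A$ being commutative, the $A$-submodules of $A$ are exactly the two-sided ideals, and by the observation in the excerpt $A$ is a Green field precisely when $A$ is a simple $A$-module, i.e. when the only $A$-submodules of $A$ are $0$ and $A$ itself. So I would fix a nonzero element $a\in A(H)$ for some group $H\in\CD$, and consider the $A$-submodule $\langle a\rangle$ of $A$ that it generates.

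First I would describe $\langle a\rangle$ concretely. Viewing $a\in A(H)=A(H\times\un)$ as a morphism $\un\to H$ in $\CP_A$, the submodule it generates is the image of the morphism of $A$-modules $A_H\to A$ adjoint to $a$, whose component at a group $G$ sends $\beta\in A_H(G)=A(G\times H)$ to $\beta\circ a\in A(G\times\un)=A(G)$. Concretely, $\langle a\rangle(G)=\{\beta\circ a\mid \beta\in A(G\times H)\}$ (using that the $A$-module structure is given by composition in $\CP_A$, plus functoriality of $A$ to absorb the various inductions, deflations, etc.). In particular $\langle a\rangle(\un)=\{b\circ a\mid b\in A(\un\times H)=A(H)\}$ is an ideal of the ordinary ring $(A(\un),\cdot)$, and by Remark~\ref{trivial group} this composition product on $A(\un)$ coincides with the ring product $\cdot$. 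Note $\varepsilon_A\in A(\un)$ is the identity of this ring.

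Now I would prove the two implications. For the ``only if'' direction: if $A$ is a Green field and $a\neq0$, then $\langle a\rangle$ is a nonzero $A$-submodule of $A$, hence $\langle a\rangle=A$; evaluating at $\un$ gives $\langle a\rangle(\un)=A(\un)\ni\varepsilon_A$, so $\varepsilon_A=b\circ a$ for some $b\in A(H)$, which is exactly the stated condition. For the ``if'' direction: suppose the invertibility condition holds, and let $I$ be a nonzero $A$-submodule of $A$. Then there is a group $H$ and a nonzero element $a\in I(H)$; by hypothesis there is $b\in A(H)=A(\un\times H)$ with $b\circ a=\varepsilon_A$. Since $I$ is an $A$-submodule, $b\circ a\in I(\un)$, so $\varepsilon_A\in I(\un)$. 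It remains to see that an $A$-submodule of $A$ containing $\varepsilon_A$ must be all of $A$: for any $G\in\CD$ and any $c\in A(G)=A(G\times\un)$, we have $c=c\circ\varepsilon_A\in I(G)$, using that $\varepsilon_A$ is the identity morphism of $\un$ in $\CP_A$ and that $I$ is closed under the $A$-action (postcomposition by morphisms out of $\un$). Hence $I=A$, and $A$ is a simple $A$-module, i.e. a Green field.

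The main point requiring care — the ``obstacle'', such as it is — is the bookkeeping identifying the $A$-module structure maps with composition in $\CP_A$ and checking that $\langle a\rangle(G)=A(G\times H)\circ a$ really is the submodule generated by $a$ (that this set is stable under the $A$-action on all evaluations, which reduces to associativity of composition in $\CP_A$). Everything else is a short formal argument. I would also make explicit the harmless identifications $A(H)=A(H\times\un)=A(\un\times H)$ via the relevant $\Iso$ functors, and invoke Remark~\ref{trivial group} to know that the three products on $A(\un)$ agree so that ``$\varepsilon_A$'' unambiguously denotes the multiplicative identity.
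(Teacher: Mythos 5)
Your proposal is correct and follows essentially the same argument as the paper: describe the submodule $\langle a\rangle$ generated by $a$ as $\langle a\rangle(G)=A(G\times H)\circ a$, evaluate at $\un$ to get the ``only if'' direction, and for the converse observe that a nonzero ideal $I$ must contain some $a\neq 0$, hence $\varepsilon_A\in I(\un)$, hence $I=A$. The extra detail you spell out (that the submodule generated by $\varepsilon_A$ is all of $A$, via $c=c\circ\varepsilon_A$) is implicit in the paper's phrasing and does not constitute a different route.
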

\begin{proof}
Suppose that $A$ is a Green field. For a non zero element $a$ of $A(H)$, consider the $A$-submodule $\langle a\rangle$ of $A$ generated by $a$. Its evaluation at $L\in \CD$ is equal to $A(L\times H)\circ a$, i.e. the set of elements $b\circ a$, for $b\in A(L\times H)$. Since $\langle a\rangle$ is non zero, we have $\langle a\rangle =A$. In particular $\langle a\rangle(\un)=A(\un)$, so there exists $b\in A(\un\times H)$ such that $b\circ a=\varepsilon_A$, as was to be shown. 

For the converse, let $I$ be a non zero ideal of $A$. Let $H$ be a group such that $I(H)\neq 0$ and take $a\neq 0$ in $I(H)$. Since there exists $b\in A(H)=A(\un\times H)$ such that $b\circ a=\varepsilon_A$, then $\varepsilon_A\in I(1)$. So $I$ contains the $A$-submodule of $A$ generated by $\varepsilon_A$, which is the whole of $A$. Hence $I=A$.  
\end{proof}

If $A$ is a commutative Green biset functor, then $A(\un)$ is a commutative ring, and in fact $A$ can be viewed as a Green biset functor over $A(\un)$. This is because, more generally, if $M$ is an $A$-module and if $\alpha\in A(H\times L)$ for some groups $H,L\in \CD$, then the map $M(\alpha):M(L)\to M(H)$ is $A(\un)$-linear. This  follows from the fact that, by Proposition~39 of~\cite{centros}, the functor $A$ maps into the Green biset functor $ZA$ (the {\em center} of $A$) when $A$ is commutative, and that the evaluation $ZA(\un)$ is the endomorphism algebra of the identity functor of the category $A\Mod$.

In the same vein, we have the following lemma:
 
\begin{lema} 
\label{isfield}
Let $A$ be a commutative Green biset functor.
\begin{enumerate}
\item If $A$ is a Green field, then $A(\un)$ is a field.
\item Conversely if $A(\un)$ is a field, then $A$ has a unique maximal proper ideal. If in addition $A$ is a semisimple $A$-module, then $A$ is a Green field.
\end{enumerate}
\end{lema}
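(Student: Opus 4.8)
The plan is to handle the two parts separately: part~(1) is essentially a special case of Lemma~\ref{invertible}, and part~(2) rests on the observation that, when $A(\un)$ is a field, the evaluation of an ideal of $A$ at the trivial group is either $0$ or everything.

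For (1), recall that a Green field is in particular a nonzero functor, so $\varepsilon_A\neq 0$ and $A(\un)$ is a nonzero commutative ring. Given $a\neq 0$ in $A(\un)=A(\un\times\un)$, Lemma~\ref{invertible} applied with $H=\un$ produces $b\in A(\un)$ with $b\circ a=\varepsilon_A$. By Remark~\ref{trivial group} the products $\circ$ and $\cdot$ coincide on $A(\un)$, so $b\cdot a=\varepsilon_A$; since $\big(A(\un),\cdot\big)$ is commutative, this shows $a$ is a unit. Hence $A(\un)$ is a field.

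For (2), assume $A(\un)$ is a field. The key remark is that for any ideal $I$ of $A$, the evaluation $I(\un)$ is an $A(\un)$-submodule of $A(\un)$, hence an ideal of the field $A(\un)$, so $I(\un)$ is $0$ or all of $A(\un)$; and in the latter case $\varepsilon_A\in I$, whence $I\supseteq\langle\varepsilon_A\rangle=A$. Thus an ideal $I$ is proper if and only if $I(\un)=0$. Since the evaluation at $\un$ of a sum of ideals is the sum of the evaluations, the sum $J$ of all proper ideals of $A$ still satisfies $J(\un)=0$; hence $J$ is a proper ideal containing every proper ideal, i.e.\ the unique maximal proper ideal of $A$. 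Now suppose in addition that $A$ is semisimple as an $A$-module. Recall that, $A$ being commutative, its ideals are exactly its $A$-submodules; writing $A$ as a direct sum $\bigoplus_{i\in I}S_i$ of simple submodules (possible since $A$ is semisimple), if $|I|\geq 2$ then the submodules $\bigoplus_{i\neq j}S_i$, for varying $j\in I$, give at least two distinct maximal proper ideals, contradicting the uniqueness just established. Hence $|I|\leq 1$, and since $A\neq 0$ (a field has $1\neq 0$) we get $|I|=1$, i.e.\ $A$ is a simple $A$-module, which is to say a Green field.

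The routine points — that $I(\un)$ is an $A(\un)$-submodule of $A(\un)$ (immediate from the $A$-module structure), that a sum of ideals can be computed evaluation by evaluation, and that a semisimple module decomposes as a direct sum of simple submodules — need no comment. The step needing genuine care is the last one: one must use that, $A$ being commutative, the hypothesis ``$A$ is a semisimple $A$-module'' is really a statement about the ideals of $A$, and then play the structure theory of semisimple modules against the rigidity at $\un$ coming from $A(\un)$ being a field. There is also a harmless set-theoretic subtlety in speaking of ``the sum of all proper ideals'' since $\CD$ may be a proper class, but it is resolved by forming the sum inside each fixed set $A(G)$.
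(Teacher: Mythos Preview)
Your proof is correct. Part~(1) is essentially the paper's argument. In part~(2) you diverge from the paper in two places. To exhibit the unique maximal proper ideal, you take the sum of all proper ideals and note that its evaluation at~$\un$ is still zero; the paper instead writes down this ideal explicitly as
\[
J(H)=\{u\in A(H)\mid \alpha\circ u=0\text{ for all }\alpha\in A(\un\times H)\},
\]
checks it is an $A$-submodule with $J(\un)=\zero$, and observes that every proper ideal lands inside it. Your argument is quicker; the paper's has the advantage of giving a concrete description of $J$ (which is exactly the radical of the forms $\langle-,-\rangle_{H,\un}$ appearing later). For the final step, you decompose $A$ as a direct sum $\bigoplus_i S_i$ of simples and argue that two or more summands would yield two distinct maximal proper submodules; the paper instead uses that a semisimple module has complemented submodules, takes a complement $J'$ of $J$, and notes that $J'\neq A$ would force $J'\subseteq J$ and hence $J=A$. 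Both are standard semisimple-module manoeuvres; the paper's version sidesteps choosing a decomposition (and the implicit Zorn argument) at the cost of invoking the complemented-submodule characterisation. A small stylistic point: you reuse the letter $I$ for the index set after having used it for an ideal.
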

\begin{proof} Assertion 1 is a straightforward consequence of Lemma~\ref{invertible} and Remark~\ref{trivial group}.\par
Conversely, suppose that $A$ is a commutative Green biset functor such that $A(\un)$ is a field, and let $I$ be an ideal of $A$. Then $I(\un)$ is an ideal of $A(\un)$, hence $I(\un)=A(\un)$ or $I(\un)=\zero$. If $I(\un)=A(\un)$, then $I(\un)$ contains the identity element $\varepsilon_A$ of $A$. Hence $I=A$ in this case. Assume now that $I(\un)=\zero$. Then for any $H\in\CD$, the evaluation $I(H)$ is contained in the $k$-submodule $J(H)$ of $A(H)$ defined by
$$J(H)=\{u\in  A(H)=A(H\times\un)\mid\forall \alpha\in A(\un\times H),\;\alpha\circ u=0\}.$$
It is easy to see that the assignment $H\mapsto J(H)$ defines an $A$-submodule of $A$, and that $J(\un)=\zero$. It follows that $I\subseteq J$, and that $J$ is the unique maximal (proper) ideal of $A$. \par
If in addition $A$ is a semisimple $A$-module, there is an ideal $J'$ of $A$ such that $A=J\oplus J'$. If $J'\neq A$, then $J'\subseteq J$, and then $J=J+J'=A$, a contradiction. Hence $J'=A$, so $J=J\cap J'=0$. It follows that $A$ has no non zero proper ideals, hence it is a Green field.
\end{proof}

\begin{nota} Let $A$ be a Green biset functor, $M$ be an $A$-module and $G$, $H$ and $K$ be groups in $\CD$. For $a\in A(K\times G)$ and $m\in M(G\times H)$, we denote by $a\circ m$ the element of $M(K\times H)$ defined by
$$a\circ m=M\big(\Def_{K\times H}^{K\times \Delta(G)\times H}\big)M\big(\Res_{K\times \Delta(G)\times H}^{K\times G\times G\times H}\big)(a\times m).$$
\end{nota}
Using Definition 10 in \cite{centros}, one can see that $a\circ m=M(a\times H)(m)$, but we will not use this fact. In the case $M=A$, this notation is consistent with the definition of the composition in the category $\CP_A$ given in Definition~\ref{PA}. Moreover:
\begin{lema}\label{assoc} Let $A$ be a Green biset functor and $M$ be an $A$-module. Let moreover $G$, $H$ and $K$ be finite groups in $\CD$. Then for any $a\in A(K\times G)$, $\alpha\in A(G)$ and $m\in M(H)$,
$$a\circ(\alpha\times m)=(a\circ\alpha)\times m$$
in $M(K\times H)$.
\end{lema}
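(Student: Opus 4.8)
The plan is to unwind the left-hand side $a\circ(\alpha\times m)$ according to the Notation just introduced, to push the elementary bisets involved past the factor $H$, and then to recognise what remains as $(a\circ\alpha)\times m$.

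First I would apply the associativity axiom of the $A$-module $M$. Denoting by $a\times\alpha\in A(K\times G\times G)$ the external product in $A$ of $a\in A(K\times G)$ and $\alpha\in A(G)$, this axiom (in the form $(u\times v)\times w=u\times(v\times w)$ for $u\in A(X)$, $v\in A(Y)$, $w\in M(Z)$, applied with $X=K\times G$, $Y=G$, $Z=H$, $u=a$, $v=\alpha$, $w=m$) gives
$$a\times(\alpha\times m)=(a\times\alpha)\times m$$
in $M(K\times G\times G\times H)$, where the strict associativity of the direct product of groups is used to identify the two sides.

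Next I would analyse the morphism of the biset category appearing in the Notation, namely $D=\Def_{K\times H}^{K\times\Delta(G)\times H}\circ\Res^{K\times G\times G\times H}_{K\times\Delta(G)\times H}$. Using the standard properties of the elementary bisets, restriction to $K\times\Delta(G)\times H$ and deflation by $\Delta(G)$ act as the identity on the last factor $H$, so that $D=\Phi\times\Id_H$, where
$$\Phi=\Def_{K}^{K\times\Delta(G)}\circ\Res^{K\times G\times G}_{K\times\Delta(G)}\colon K\times G\times G\longrightarrow K$$
is precisely the morphism used in Definition~\ref{PA} to compose $a$, seen as a morphism $G\to K$, with $\alpha$, seen as a morphism $\un\to G$; thus, after identifying $K\times\un$ with $K$ and viewing $\alpha$ as an element of $A(G\times\un)$, one has $A(\Phi)(a\times\alpha)=a\circ\alpha$. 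I expect this to be the only point requiring real care: it amounts to a short but slightly fiddly check with the relevant bisets together with the harmless identifications with the trivial group.

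Finally I would invoke the functoriality axiom of the $A$-module $M$ with respect to the external product of the bisets $\Phi$ and $\Id_H$, namely $M(\Phi\times\Id_H)(x\times y)=A(\Phi)(x)\times M(\Id_H)(y)$ for $x\in A(K\times G\times G)$ and $y\in M(H)$. Combining this with the two displays above yields
$$a\circ(\alpha\times m)=M(\Phi\times\Id_H)\big((a\times\alpha)\times m\big)=A(\Phi)(a\times\alpha)\times m=(a\circ\alpha)\times m,$$
which is the desired equality.
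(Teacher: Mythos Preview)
Your proposal is correct and follows essentially the same route as the paper: unwind the Notation, use associativity $a\times(\alpha\times m)=(a\times\alpha)\times m$, then use functoriality of the product to push the $\Def\circ\Res$ biset past the $H$-factor and recognise $a\circ\alpha$. The only cosmetic difference is that you bundle the restriction and deflation into a single morphism $\Phi$ and apply functoriality once, whereas the paper applies it separately to each elementary biset; the argument is otherwise identical.
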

\begin{proof} Indeed
\begin{align*}
a\circ(\alpha\times  m)&=M(\Def_{K\times H}^{K\times\Delta(G)\times H})M(\Res_{K\times\Delta(G)\times H}^{K\times G\times G\times H})(a\times \alpha\times m)\\
&=M(\Def_{K\times H}^{K\times\Delta(G)\times H})\big(A(\Res_{K\times\Delta(G)}^{K\times G\times G})(a\times \alpha)\times m\big)\\
&=\big(A(\Def_{K}^{K\times\Delta(G)})A(\Res_{K\times\Delta(G)}^{K\times G\times G})(a\times \alpha)\big)\times m\\
&=(a\circ\alpha)\times m,
\end{align*}
as was to be shown. 
\end{proof}
\begin{prop} Let $A$ be a Green field and $M$ be an $A$-module. Then for any finite groups $G$ and $H$ in $\CD$, the linear map
$$\pi_{G,H}:A(G)\otimes_{A(\un)}M(H)\to M(G\times H)$$
sending $\alpha\otimes m$ to $\alpha\times m$ is injective.
\end{prop}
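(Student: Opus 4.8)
The plan is to reduce the statement to a claim about linearly independent families in $A(G)$, and then to exploit the ``non-degeneracy of composition'' packaged in Lemma~\ref{invertible} to build an explicit dual system. First, by Lemma~\ref{isfield} the ring $A(\un)$ is a field (in particular $\varepsilon_A\neq 0$), so $A(G)$ is an $A(\un)$-vector space. Any element $\xi$ of $A(G)\otimes_{A(\un)}M(H)$ can be rewritten as $\xi=\sum_{i=1}^n\alpha_i\otimes m_i$ with $\alpha_1,\dots,\alpha_n\in A(G)$ linearly independent over $A(\un)$: take a basis of the finite-dimensional $A(\un)$-subspace of $A(G)$ spanned by the terms occurring in any expression of $\xi$, and absorb the coefficients into the $M(H)$-factors. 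For such an expression, the standard behaviour of tensor products over a field gives $\xi=0$ if and only if $m_1=\cdots=m_n=0$. Hence it suffices to prove: if $\alpha_1,\dots,\alpha_n\in A(G)$ are linearly independent over $A(\un)$ and $\sum_{i=1}^n\alpha_i\times m_i=0$ in $M(G\times H)$, then every $m_i$ vanishes.

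The heart of the argument is to produce elements $b_1,\dots,b_n\in A(G)=A(\un\times G)$ with $b_j\circ\alpha_i=\delta_{ij}\,\varepsilon_A$ for all $i,j$, the right-hand side being computed in $A(\un)$ via the identifications of Remark~\ref{trivial group}. Equivalently, I want the $A(\un)$-linear map $\Phi\colon A(G)\to A(\un)^n$, $\Phi(b)=(b\circ\alpha_1,\dots,b\circ\alpha_n)$, to be surjective; the $b_j$ are then any preimages of the standard basis vectors of $A(\un)^n$. If $\Phi$ were not surjective, there would exist a nonzero tuple $(c_1,\dots,c_n)\in A(\un)^n$ with $\sum_i c_i\,(b\circ\alpha_i)=0$ for every $b\in A(\un\times G)$. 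Since $A$ is commutative, the $A(\un)$-action on $A(G)$ is given by the external product $\times$, and Lemma~\ref{assoc} applied with $M=A$ (together with Remark~\ref{trivial group}) yields $c_i\,(b\circ\alpha_i)=(b\circ\alpha_i)\times c_i=b\circ(\alpha_i\times c_i)$; by additivity of composition in $\CP_A$ the relation becomes $b\circ\gamma=0$ for all $b\in A(\un\times G)$, where $\gamma=\sum_i\alpha_i\times c_i=\sum_i c_i\alpha_i\in A(G)$. By Lemma~\ref{invertible} this forces $\gamma=0$, and then linear independence of the $\alpha_i$ forces all $c_i=0$, a contradiction. Hence $\Phi$ is surjective and the desired $b_j$ exist.

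Finally, apply $b_j\circ(-)$ to the relation $\sum_i\alpha_i\times m_i=0$. Since $b_j\in A(\un\times G)$, the left-hand side lands in $M(\un\times H)=M(H)$, and using additivity of $b_j\circ(-)$ together with Lemma~\ref{assoc} (this time for the module $M$) we obtain $0=\sum_i(b_j\circ\alpha_i)\times m_i=\varepsilon_A\times m_j$; the identity axiom for $A$-modules then gives $m_j=0$. As $j$ was arbitrary, all $m_i$ vanish, so $\xi=0$, and $\pi_{G,H}$ is injective. The only delicate step is the surjectivity of $\Phi$: because $A(G)$ may be infinite-dimensional over $A(\un)$ one cannot simply invert a Gram-type matrix, and the argument must pass through Lemma~\ref{invertible}; everything else is routine bookkeeping with the module axioms and the identifications $A(G)\cong A(G\times\un)\cong A(\un\times G)$.
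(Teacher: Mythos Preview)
Your proof is correct and rests on the same two lemmas as the paper's (Lemma~\ref{invertible} and Lemma~\ref{assoc}); only the packaging differs. The paper argues by minimal counterexample: it picks a nonzero $u=\sum_{i=1}^n\alpha_i\otimes m_i$ in the kernel with $n$ minimal, uses Lemma~\ref{invertible} to find a single $a\in A(\un\times G)$ with $a\circ\alpha_n=\varepsilon_A$, applies $a\circ(-)$ via Lemma~\ref{assoc} to solve for $m_n$, and substitutes back to rewrite $u$ with $n-1$ tensors, contradicting minimality. You instead first pass to an $A(\un)$-linearly independent family $(\alpha_i)$ and then build a full dual system $(b_j)$ with $b_j\circ\alpha_i=\delta_{ij}\varepsilon_A$, reading off $m_j=0$ in one stroke. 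Your surjectivity argument for $\Phi$ is really the same non-degeneracy statement the paper uses, just phrased for all the $\alpha_i$ at once rather than for one at a time; the paper's descent is a touch shorter, while your dual-basis formulation makes the linear-algebra structure more transparent.
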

\begin{proof} Let $u$ be a non zero element of the kernel of $\pi_{G,H}$, and $n\geq 1$ be the smallest integer such that $u$ can be written $u=\sum_{i=1}^n\limits\alpha_i\otimes m_i$, for elements $\alpha_i\in A(G)$ and $ m_i\in M(H)$. Then all the elements $\alpha_i$ and $ m_i$ are non zero. In particular, by Lemma~\ref{invertible}, there exists $a\in A(G)=A(\un\times G)$ such that $a\circ\alpha_n=\varepsilon_A$. By Lemma~\ref{assoc}, we get
\begin{align*}0&=a\circ \sum_{i=1}^n(\alpha_i\times m_i)=\sum_{i=1}^na\circ(\alpha_i\times m_i)\\
&=\sum_{i=1}^n(a\circ\alpha_i)\times m_i=\big(\sum_{i=1}^{n-1}(a\circ\alpha_i)\times m_i\big) +(\varepsilon_A\times m_n).
\end{align*}
It follows that $ m_n=-\sum_{i=1}^{n-1}\limits(a\circ\alpha_i)\times m_i$, so 
\begin{align*}u&=\sum_{i=1}^{n-1}\alpha_i\otimes m_i-\alpha_n\otimes \sum_{i=1}^{n-1}\big((a\circ\alpha_i)\times m_i\big)\\
&=\sum_{i=1}^{n-1}\alpha_i\otimes m_i-\sum_{i=1}^{n-1}\alpha_n\otimes\big((a\circ\alpha_i)\times m_i\big)\\
&=\sum_{i=1}^{n-1}\alpha_i\otimes m_i-\sum_{i=1}^{n-1}\big(\alpha_n\times(a\circ\alpha_i)\big)\otimes m_i,\textrm{ since }a\circ\alpha_i\in A(\un),\\
&=\sum_{i=1}^{n-1}\Big(\alpha_i-\big(\alpha_n\times(a\circ\alpha_i)\big)\Big)\otimes m_i.
\end{align*}
This contradicts the minimality of $n$ and completes the proof.
\end{proof}

A relevant question is, of course, to know whether a Green field is always semisimple.
 To address this question  we introduce the following notation and results. Unfortunately, as we  will see in Example \ref{trivsou}, a general  answer seems difficult to find.

\begin{nota} 
Let $A$ be a Green biset functor. For a finite group $L$, we denote by $t_L$ the linear map $A(\Def_\un^L):A(L)\to A(\un)$, and by $\langle-,-\rangle_L$ the bilinear map
$$(u,v)\in A(L)\times A(L)\mapsto t_L(u\cdot v)\in A(\un).$$ 
When considering the bilinear map $\langle-,-\rangle_{H\times L}$, defined in $A(H\times L)$, for finite groups $H$ and $L$, we will write  $\langle-,-\rangle_{H,L}$ instead of $\langle-,-\rangle_{H\times L}$. 
\end{nota}

\begin{rem}
If $A$ is a commutative Green biset functor, the bilinear map $\langle-,-\rangle_L$ is symmetric. This is clear since the ring $\big(A(L),\cdot\big)$ is commutative.
\end{rem}

\begin{lema} \label{bilinear maps}Let $A$ be a Green biset functor and let $L$ and $H$ be  groups in $\CD$. Then, for any $\alpha,\beta\in A(H\times L)$
$$\langle\alpha,\beta\rangle_{H, L}=A(\Def_\un^{\Delta (L)}\Res_{\Delta (L)}^{L\times L})(\alpha\op\circ\beta).$$
 \end{lema}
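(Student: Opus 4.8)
The plan is to unwind both sides using the definitions and compare. Recall $\langle\alpha,\beta\rangle_{H,L} = t_{H\times L}(\alpha\cdot\beta) = A(\Def_\un^{H\times L})\big(A(\Iso_{\Delta(H\times L)}^{H\times L})A(\Res_{\Delta(H\times L)}^{(H\times L)\times(H\times L)})(\alpha\times\beta)\big)$. On the other side, by the Notation for $\alpha\op\circ\beta$ (or equivalently the composition in $\CP_A$ between $\un$ and $\un$), since $\alpha\op\in A(L\times H)$ is a morphism $H\to L$ and $\beta\in A(H\times L)$ is a morphism $L\to H$... wait, I need to set this up so that the composite lands in $A(\un\times\un)=A(\un)$. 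The cleanest reading: view $\alpha,\beta\in A(H\times L)$ as morphisms $L\to H$ in $\CP_A$; then $\alpha\op$ is a morphism $H\to L$, and $\alpha\op\circ\beta$ is a morphism $L\to L$, i.e. an element of $A(L\times L)$. Hmm — but the statement has $\alpha\op\circ\beta$ with a $\Res$ down to $\Delta(L)$ and a $\Def$ to $\un$, so indeed $\alpha\op\circ\beta\in A(L\times L)$ and we then apply $A(\Def_\un^{\Delta(L)}\Res_{\Delta(L)}^{L\times L})$ to land in $A(\un)$. So I should expand $\alpha\op\circ\beta$ first.

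First I would write, using Definition~\ref{PA} with $H\mapsto L$, $G\mapsto H$, $K\mapsto L$: $\alpha\op\circ\beta = A\big(\Def_{L\times L}^{L\times\Delta(H)\times L}\Res_{L\times\Delta(H)\times L}^{L\times H\times H\times L}\big)(\alpha\op\times\beta)$, and then substitute $\alpha\op = A(\Iso_{H\times L}^{L\times H})(\alpha)$, so that $\alpha\op\times\beta = A(\text{permutation})(\alpha\times\beta)$ where the permutation rearranges the factors of $(H\times L)\times(H\times L)$ into $(L\times H)\times(H\times L)$. Composing all the elementary-biset functor images and using the standard commutation rules between $\Def$, $\Res$ and $\Iso$ (exactly as in the proof of Lemma~\ref{opposite}), the right-hand side becomes $A$ applied to a single composite elementary morphism $(H\times L)\times(H\times L)\to\un$. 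Then I would expand the left-hand side similarly: $\langle\alpha,\beta\rangle_{H,L}$ is $A(\Def_\un^{\Delta(H\times L)}\Res_{\Delta(H\times L)}^{(H\times L)^2})(\alpha\times\beta)$ after identifying $\Iso_{\Delta(H\times L)}^{H\times L}$ appropriately, i.e. $A$ applied to another composite $(H\times L)\times(H\times L)\to\un$ (restricting to the diagonal copy of $H\times L$ and then deflating everything to $\un$).

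The core of the argument is then the identification of these two composite elementary morphisms. Both are (up to isomorphism) the operation "restrict $(H\times L)\times(H\times L)$ to the subgroup $\{(h,\ell,h,\ell)\}$, then deflate to $\un$": on the right-hand side the restriction is first to $L\times\Delta(H)\times L = \{(\ell,h,h,\ell')\}$ and then the deflation $L\times\Delta(H)\times L\to L\times L$ followed by restriction to $\Delta(L)$ and deflation to $\un$ amounts to further identifying $\ell=\ell'$ and killing everything; on the left-hand side one restricts directly to $\Delta(H\times L)=\{(h,\ell,h,\ell)\}$ and deflates. So after transporting by the obvious permutation isomorphism of $(H\times L)\times(H\times L)$ that matches the two subgroups, and using that $\Def$ composed with $\Def$ is $\Def$ (transitivity of deflation through $\Delta(L)$), and that $\Res$ composed with $\Res$ is $\Res$ (transitivity of restriction: $\Delta(H\times L)\leq L\times\Delta(H)\times L\leq (H\times L)^2$ after the permutation), the two sides agree. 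This bookkeeping with elementary bisets — keeping track of which coordinates get identified and which get killed, and checking that no Mackey-type cross terms appear because restriction commutes past deflation here since the relevant subgroups are nested — is the only real obstacle; it is entirely analogous to the computation already carried out in the proof of Lemma~\ref{opposite}, so I expect it to go through without surprises.
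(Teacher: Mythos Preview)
Your plan is correct and matches the paper's proof essentially step for step: expand $\alpha\op\circ\beta$ via Definition~\ref{PA}, replace $\alpha\op$ by $A(\Iso)(\alpha)$, commute $\Res_{\Delta(L)}^{L\times L}$ past $\Def_{L\times L}^{L\times\Delta(H)\times L}$ to get $\Def^{D}_{\Delta(L)}\Res^{L\times\Delta(H)\times L}_{D}$ with $D=\{(l,h,h,l)\}$, and collapse the resulting chain of elementary bisets to $\Def_\un^{H\times L}\Iso\,\Res_{\Delta(H\times L)}^{(H\times L)^2}$, which is exactly $t_{H\times L}(\alpha\cdot\beta)$. One small correction of wording: the single-term commutation $\Res\circ\Def\cong\Def\circ\Res$ holds not because the subgroups are \emph{nested}, but because interchanging restriction with deflation (as opposed to induction) always yields a single preimage subgroup rather than a Mackey sum over double cosets.
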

\begin{proof} 
In the following computations, we will omit the $\times$ signs in the direct products of groups, thus writing e.g. $HLL$ instead of $H\times L\times L$.

Consider first $A(\Def_\un^{\Delta(L)}\Res_{\Delta(L)}^{LL})(\alpha\op\circ\beta)$. This is equal to
\begin{displaymath}
A(\Def_\un^{\Delta(L)}\Res_{\Delta(L)}^{LL}\Def_{LL}^{L\Delta(H)L}\Res_{L\Delta(H)L}^{LHHL})(\alpha\op\times\beta).
\end{displaymath}
Now $\Res_{\Delta(L)}^{LL}\Def_{LL}^{L\Delta(H)L}\cong \Def^{D}_{\Delta(L)} \Res^{L\Delta(H)L}_{D}$, with $D=\{(l,\,h,\,h,\,l)\mid h\in H,\ l\in L\}$, and $\alpha\op\times\beta=A(\Iso(\theta))(\alpha\times\beta)$, where $\theta:HLHL\to LHHL$ swaps the first two components. Hence
\begin{align*}
A(\Def_\un^{\Delta(L)}\Res_{\Delta(L)}^{LL})(\alpha\op\circ\beta)&=A(\Def_\un^{\Delta(L)}\Def^{D}_{\Delta(L)} \Res^{L\Delta(H)L}_{D}\Res_{L\Delta(H)L}^{LHHL}\Iso(\theta))(\alpha\times\beta)\\
&=A(\Def_\un^{HL}\Iso^D_{HL}\Res_{D}^{LHHL}\Iso(\theta))(\alpha\times\beta)\\
&=A(\Def_\un^{HL}\Iso^{\Delta(HL)}_{HL}\Res_{\Delta(HL)}^{HLHL})(\alpha\times\beta)\\
&=A(\Def_\un^{HL})(\alpha\cdot\beta)=t_{HL}(\alpha\cdot\beta)\\
&=\langle \alpha,\beta\rangle_{H,L}
\end{align*}
as desired.
\end{proof}


The bilinear map just defined allows us to prove the following characterization of a Green field.

\begin{prop} \label{non degenerate} Let $A$ be a commutative Green biset functor. The following conditions are equivalent:
\begin{enumerate}
\item $A$ is a Green field.
\item $A(\un)$ is a field and the bilinear map $\langle-,-\rangle_{H,\un}$ is non degenerate, for any $H\in\CD$.
\item $A(\un)$ is a field and the bilinear map $\langle-,-\rangle_{H,L}$ is non degenerate, for any $H,L\in\nolinebreak\CD$.
\end{enumerate}
\end{prop}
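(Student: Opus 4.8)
The plan is to prove the cycle of implications $(3)\Rightarrow(2)\Rightarrow(1)\Rightarrow(3)$, using Lemma~\ref{bilinear maps} to translate the non-degeneracy condition into a statement about composition of morphisms in $\CP_A$, and then applying Lemma~\ref{invertible} and Lemma~\ref{isfield}. The implication $(3)\Rightarrow(2)$ is trivial, since taking $L=\un$ in condition (3) gives condition (2) (recall $A(H\times\un)\cong A(H)$, and $\langle-,-\rangle_{H,\un}$ is then just $\langle-,-\rangle_H$).

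For $(2)\Rightarrow(1)$: assume $A(\un)$ is a field and $\langle-,-\rangle_{H,\un}$ is non-degenerate for all $H$. By Lemma~\ref{isfield}(2), $A$ already has a unique maximal proper ideal $J$, so it suffices to show $J=0$, i.e.\ that every nonzero $a\in A(H)$ generates $A$ as an $A$-module; by Lemma~\ref{invertible} this amounts to producing $b\in A(\un\times H)$ with $b\circ a=\varepsilon_A$. Here is where I would use Lemma~\ref{bilinear maps}: for $\alpha,\beta\in A(H)=A(H\times\un)$ we have $\langle\alpha,\beta\rangle_{H,\un}=A(\Def_\un^{\Delta(\un)}\Res_{\Delta(\un)}^{\un\times\un})(\alpha\op\circ\beta)$, and since $\un\times\un$, $\Delta(\un)$, and $\un$ are all trivial, the deflation and restriction functors are identities — so $\langle\alpha,\beta\rangle_{H,\un}=\alpha\op\circ\beta\in A(\un)$, where $\alpha\op\in A(\un\times H)$. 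Given nonzero $a\in A(H)$, non-degeneracy of $\langle-,-\rangle_{H,\un}$ over the field $A(\un)$ means the linear form $\beta\mapsto\langle a,\beta\rangle_{H,\un}$ is nonzero (symmetry is not even needed, but it is available); hence there is $\beta_0\in A(H)$ with $a\op\circ\beta_0$ a \emph{nonzero} element $c$ of the field $A(\un)$. Multiplying by $c^{-1}\in A(\un)$ and absorbing this scalar into $\beta_0$ (using that $A$ is an $A(\un)$-linear category, or directly that $(c^{-1}\cdot\beta_0)\op$ works after taking opposites), one obtains $b\in A(\un\times H)$ with $b\circ a=\varepsilon_A$. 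Wait — I need to be careful about which side the scalar lands and about the op: more cleanly, from $\langle a,\beta_0\rangle_{H,\un}=a\op\circ\beta_0=c\neq 0$, set $b=(c^{-1}\cdot\beta_0)\op\in A(\un\times H)$ — but then $b\circ a=(c^{-1}\cdot\beta_0)\op\circ a$, and I must check this equals $\varepsilon_A$; using Lemma~\ref{opposite} and the consistency of products at $\un$ (Remark~\ref{trivial group}), $b\circ a=(a\op\circ(c^{-1}\cdot\beta_0))\op=(c^{-1}\cdot(a\op\circ\beta_0))\op=(c^{-1}\cdot c)\op=\varepsilon_A\op=\varepsilon_A$, since $A(\un)$-linearity of composition lets the scalar $c^{-1}$ pass through $a\op\circ(-)$. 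Then Lemma~\ref{invertible} gives that $A$ is a Green field.

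For $(1)\Rightarrow(3)$: assume $A$ is a Green field. By Lemma~\ref{isfield}(1), $A(\un)$ is a field, so the first half of (3) holds. For the non-degeneracy of $\langle-,-\rangle_{H,L}$, take a nonzero $\alpha\in A(H\times L)$, viewed as a morphism $L\to H$ in $\CP_A$. By Lemma~\ref{bilinear maps}, $\langle\alpha,\beta\rangle_{H,L}=A(\Def_\un^{\Delta(L)}\Res_{\Delta(L)}^{L\times L})(\alpha\op\circ\beta)$ for all $\beta\in A(H\times L)$, and I want a $\beta$ making this nonzero. Since $A$ is a Green field and $\alpha\neq 0$, the $A$-submodule of $A$ generated by $\alpha$ (inside the shifted functor picture, or directly via the characterization in Lemma~\ref{invertible} applied to an appropriate group) is all of $A$; concretely, composing $\alpha$ with morphisms of $\CP_A$ reaches $\varepsilon_A\in A(\un)$. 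I would spell this out by first using Lemma~\ref{invertible}-type reasoning to get some $\gamma$ with $\gamma\circ\alpha=\varepsilon_{?}$ at a suitable object, then choosing $\beta=\alpha\circ(\text{something})$ or $\beta$ built from $\gamma\op$ so that $\alpha\op\circ\beta$ composes down, through $\Def_\un^{\Delta(L)}\Res_{\Delta(L)}^{L\times L}$, to a nonzero scalar. The cleanest route: set $\beta$ so that $\alpha\op\circ\beta=\varepsilon_L$ (the identity of $L$ in $\CP_A$, i.e.\ an element of $A(L\times L)$) — which is possible because, $A$ being a Green field, the map $\CP_A\to(\CP_A)\op$ is an equivalence (Lemma~\ref{opposite}) and every nonzero morphism is ``surjective'' in the appropriate module-theoretic sense — and then compute $A(\Def_\un^{\Delta(L)}\Res_{\Delta(L)}^{L\times L})(\varepsilon_L)$, which by the standard identity for $\varepsilon_L=A(\Ind_{\Delta(L)}^{L\times L}\Inf_\un^{\Delta(L)})(\varepsilon_A)$ equals $A(\Def_\un^{\Delta(L)}\Res_{\Delta(L)}^{L\times L}\Ind_{\Delta(L)}^{L\times L}\Inf_\un^{\Delta(L)})(\varepsilon_A)$; the elementary biset identity $\Res_{\Delta(L)}^{L\times L}\Ind_{\Delta(L)}^{L\times L}$ contains the identity term (Mackey formula), so this composite hits $\varepsilon_A\neq 0$ up to a term that survives the deflation. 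Thus $\langle\alpha,\beta\rangle_{H,L}\neq 0$.

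The main obstacle is the last step $(1)\Rightarrow(3)$: one must manufacture, from the mere fact that $\alpha\neq 0$ generates $A$, a specific $\beta$ for which the explicit composite $\Def_\un^{\Delta(L)}\Res_{\Delta(L)}^{L\times L}$ applied to $\alpha\op\circ\beta$ is nonzero, and the danger is that deflation kills what restriction produces. I expect the resolution to be that reaching $\varepsilon_L\in A(L\times L)$ (or at worst an element of $A(L\times L)$ whose image under $t_L\circ(\text{diagonal restriction})$ is a unit of $A(\un)$) is exactly what the Green-field hypothesis buys, and that the Mackey-type identity for $\Res\,\Ind$ on the diagonal subgroup guarantees the surviving term; I would cite or quickly verify this biset-functor identity rather than grind it out. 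An alternative, possibly slicker, packaging of $(1)\Rightarrow(3)$ is to argue contrapositively: if $\langle-,-\rangle_{H,L}$ were degenerate, its radical $\{\alpha\mid\langle\alpha,\beta\rangle_{H,L}=0\ \forall\beta\}$ would, via Lemma~\ref{bilinear maps} and $A(\un)$-linearity of composition, be the evaluation at $H\times L$ of a proper nonzero $A$-submodule of $A$ (one checks the radical is stable under $\circ$ with arbitrary morphisms of $\CP_A$, using $(\beta\circ\alpha)\op=\alpha\op\circ\beta\op$ from Lemma~\ref{opposite} and associativity), contradicting simplicity — this avoids constructing $\beta$ explicitly and I would probably present it this way.
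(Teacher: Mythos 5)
Your cycle $3\Rightarrow 2\Rightarrow 1\Rightarrow 3$ is workable, and two of the three legs are essentially fine, but the overall route is bumpier than it needs to be because you miss the one-line observation that does most of the work in the paper: by the very definitions, $\langle \alpha,\beta\rangle_{H,L}=t_{H\times L}(\alpha\cdot\beta)=\langle\alpha,\beta\rangle_{H\times L,\un}$, so the form $\langle-,-\rangle_{H,L}$ \emph{is} the form $\langle-,-\rangle_{G,\un}$ for $G=H\times L$. This makes $2\Leftrightarrow 3$ a genuine two-way equivalence (not just the trivial specialization $L=\un$), and it means that once $1\Rightarrow 2$ is in hand, $1\Rightarrow 3$ comes for free. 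The paper proves $1\Rightarrow 2$, $2\Leftrightarrow 3$, $2\Rightarrow 1$, and everything is short.

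Your first attempt at $1\Rightarrow 3$ does not work as stated: from $\alpha\neq 0$ in $A(H\times L)$ you try to manufacture $\beta$ with $\alpha\op\circ\beta=\varepsilon_L$, but Lemma~\ref{invertible} only provides, for a nonzero $a\in A(H\times\un)$ (a morphism out of $\un$), a one-sided inverse $b\in A(\un\times H)$ with $b\circ a=\varepsilon_A$; it says nothing about splitting a general nonzero morphism $H\to L$, and indeed this stronger claim is false in general. So the Mackey-formula digression is trying to patch a step that is simply not available. Your ``alternative packaging'' via the radical ideal, by contrast, is correct and is exactly the content of the paper's Lemma~\ref{associative} (placed \emph{after} the proposition precisely so it is not needed for it): you must, as you note, check that $G\mapsto\Rad\langle-,-\rangle_G$ is an $A$-submodule of $A$, and also that it is proper (since $\langle\varepsilon_A,\varepsilon_A\rangle_\un=\varepsilon_A\neq 0$), and then simplicity forces it to be zero. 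That is a valid, slightly different organization.

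On $2\Rightarrow 1$, your scalar-absorbing with $\op$'s is correct but roundabout; the paper's version is more direct: if $I$ is an ideal with $I(\un)=\zero$, then $A(\un\times H)\circ I(H)\subseteq I(\un)=\zero$, i.e.\ $I(H)$ lies in the radical of $\langle-,-\rangle_{H,\un}$, hence $I(H)=\zero$ by non-degeneracy, so $I=0$. And for $1\Rightarrow 2$ the paper takes, for nonzero $a\in A(H)$, the element $b$ with $b\circ a=\varepsilon_A$ provided by Lemma~\ref{invertible} and observes $\langle b\op,a\rangle_{H,\un}=b\circ a=\varepsilon_A\neq 0$ directly. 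In summary: the $3\Rightarrow 2$ and $2\Rightarrow 1$ legs are sound, the first attempt at $1\Rightarrow 3$ has a real gap, the contrapositive fallback fixes it, and the whole thing collapses to the paper's argument once you notice $\langle-,-\rangle_{H,L}=\langle-,-\rangle_{H\times L,\un}$.
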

\begin{proof}
By Lemma \ref{isfield} we know that if $A$ is a Green field, then $A(1)$ is a field.\mpn
$1\Rightarrow 2$: Let $a,b\in A(H\times \un)$, we have $\langle b,a\rangle_{H,\un}=\Def_\un^{\un}\Res_{\un}^{\un\times\un}(b\op\circ a)=b\op\circ a$. So if 1) holds, by Lemma \ref{invertible}, the radical of $\langle-,-\rangle_{H,\un}$ has to be zero, and 2) holds.\mpn
$2\Leftrightarrow 3$: Suppose 2) holds. By definition, for $\alpha,\beta\in A(H\times L)$, we have 
$$\langle \alpha,\beta\rangle_{H,L}=t_{H\times L}(\alpha\cdot\beta)=\langle \alpha,\beta\rangle_{H\times L,\un},$$
and the bilinear map $\langle \alpha,\beta\rangle_{H\times L,\un}$ is non degenerate. The converse  is clear.\mpn
$2\Rightarrow 1$: Suppose that 2) holds and let $I$ be an ideal of $A$. Then $I(\un)$ is an ideal of the field $A(\un)$, thus $I(\un)=A(\un)$ or $I(\un)=\zero$. If $I(\un)=A(\un)$, then $I(\un)\ni\varepsilon_A$, so $I=A$ in this case. Now if $I(\un)=\zero$, then for any $H\in\CD$, we have that $A(\un\times H)\circ I(H)\subseteq I(\un)=\zero$, so $A(\un\times H)\circ I(H)=\zero$. It follows that $\langle A(H),I(H)\rangle_{H,\un}=\zero$, so $I(H)=\zero$ since $\langle-,-\rangle_{H,\un}$ is non degenerate. Hence $I=0$ in this case, and $A$ is a Green field.
\end{proof}
\begin{rem} In the case $A(H)$ is a finite dimensional $A(\un)$-algebra, the condition 2 above means that $t_H$ is a {\em symmetrizing form} for this algebra, i.e. that the bilinear form $\langle-,-\rangle_H$ yields an isomorphism of $\big(A(\un),A(\un)\big)$-bimodules between $A(H)$ and $\Hom_{A(\un)}\big(A(H),A(\un)\big)$. In particular $A(H)$ is a symmetric algebra in this case (see~\cite{broue_higman} for details).
\end{rem}


We have one last interesting property of the map $\langle-,-\rangle_{H,L}$.

\begin{lema}\label{associative}
Let $A$ be a commutative Green biset functor, and $H,K,L\in \CD$. Then for any $\alpha\in A(H\times K)$, $\beta\in A(L\times K)$ and $\gamma \in A(L\times H)$
$$\langle \gamma\circ\alpha,\beta\rangle_{L,K}=\langle\alpha, \gamma\op\circ\beta\rangle_{H,K}.$$
In particular, the assignment $H\mapsto\Rad\,\langle-,-\rangle_H$ defines an ideal of $A$.
\end{lema}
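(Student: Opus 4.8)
The plan is to obtain the identity directly from Lemma~\ref{bilinear maps}, combined with Lemma~\ref{opposite} and the associativity of composition in $\CP_A$, and then to deduce the ideal statement by specializing the identity to the case where one of the two groups is $\un$.

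For the first assertion, I would apply Lemma~\ref{bilinear maps} to the left-hand side, with the pair of groups $(L,K)$: since $\gamma\circ\alpha$ and $\beta$ both lie in $A(L\times K)$,
$$\langle\gamma\circ\alpha,\beta\rangle_{L,K}=A\big(\Def_\un^{\Delta(K)}\Res_{\Delta(K)}^{K\times K}\big)\big((\gamma\circ\alpha)\op\circ\beta\big).$$
Since $A$ is commutative, Lemma~\ref{opposite} gives $(\gamma\circ\alpha)\op=\alpha\op\circ\gamma\op$, and associativity of $\circ$ in $\CP_A$ then yields $(\gamma\circ\alpha)\op\circ\beta=\alpha\op\circ(\gamma\op\circ\beta)$. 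Applying Lemma~\ref{bilinear maps} once more, now to the right-hand side with the pair $(H,K)$ (note that $\gamma\op\circ\beta\in A(H\times K)$), gives
$$\langle\alpha,\gamma\op\circ\beta\rangle_{H,K}=A\big(\Def_\un^{\Delta(K)}\Res_{\Delta(K)}^{K\times K}\big)\big(\alpha\op\circ(\gamma\op\circ\beta)\big),$$
which is literally the same expression; hence the two pairings coincide. No manipulation of deflations and restrictions is needed here, since all such bookkeeping is already packaged inside Lemma~\ref{bilinear maps}.

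For the last assertion, set $R(H)=\Rad\,\langle-,-\rangle_H$; each $R(H)$ is a $k$-submodule of $A(H)$ by bilinearity of $\langle-,-\rangle_H$. Since $A$ is commutative, an ideal of $A$ is the same thing as an $A$-submodule of $A$, equivalently a subfunctor of $A$ (viewed as a $k$-linear functor on $\CP_A$) stable under all morphisms of $\CP_A$; so it suffices to check that $a\circ u\in R(L)$ whenever $u\in R(H)$ and $a\in A(L\times H)$. I would take the identity just proved with $K=\un$: identifying $A(G\times\un)$ with $A(G)$ and $\langle-,-\rangle_{G,\un}$ with $\langle-,-\rangle_G$ throughout, it reads $\langle a\circ u,\beta\rangle_L=\langle u,a\op\circ\beta\rangle_H$ for every $\beta\in A(L)$. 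As $a\op\circ\beta\in A(H)$ and $u\in R(H)$, the right-hand side is $0$; hence $a\circ u\in R(L)$, as required.

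I do not anticipate a genuine obstacle; the argument is a short bookkeeping exercise once Lemma~\ref{bilinear maps} is available. The only points demanding a little care are keeping track of which group plays which role when the general identity is specialized to $K=\un$ (so that $\langle-,-\rangle_{H,\un}$ turns into $\langle-,-\rangle_H$), and invoking the fact, recalled in the preliminaries, that a subfunctor of $A$ closed under the action of every morphism of $\CP_A$ is precisely an ideal of the commutative Green biset functor $A$.
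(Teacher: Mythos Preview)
Your argument is correct and follows essentially the same route as the paper: apply Lemma~\ref{bilinear maps}, use Lemma~\ref{opposite} together with associativity of $\circ$ to rewrite $(\gamma\circ\alpha)\op\circ\beta$ as $\alpha\op\circ(\gamma\op\circ\beta)$, and then apply Lemma~\ref{bilinear maps} again; the ideal statement is then obtained by specializing to $K=\un$. Your write-up is in fact slightly more explicit than the paper's, which abbreviates the map $A(\Def_\un^{\Delta(K)}\Res_{\Delta(K)}^{K\times K})$ as $t_{KK}$.
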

\begin{proof}
By Lemma~\ref{bilinear maps} and Lemma~\ref{opposite}, we have
\begin{align*}
\langle \gamma\circ\alpha,\beta\rangle_{L,K}&=t_{KK}\big((\gamma\circ\alpha)\op\circ\beta\big)=t_{KK}\big((\alpha\op\circ\gamma\op)\circ\beta)\big)\\
&=t_{KK}\big(\alpha\op\circ(\gamma\op\circ\beta)\big)=\langle \alpha,\gamma\op\circ\beta\rangle_{H,K},
\end{align*}
as was to be shown.\mpn
In particular for $K=\un$,  if $\alpha\in \Rad\langle-,-\rangle_H$ and $\gamma\in A(L\times H)$, then $\gamma\circ \alpha\in \Rad\langle -,-\rangle_L$. 
\end{proof}


The following result is Proposition~7 in \cite{shifted}, we include the proof here for the convenience of the reader.  We  obtain, as a corollary, a necessary and sufficient condition for a Green field to be semisimple.


\begin{prop}\label{implies semisimple}
Let $A$ be a Green biset functor over $k$. Assume the following:
\begin{enumerate}
\item If $L$ is a finite group  in $\CD$, the algebra  $A(L\times L)$ is semisimple.
\item If $H$ is a finite group  in $\CD$, the bilinear map
$$(\alpha,\beta)\in A(H\times L)^2\mapsto \alpha^{\rm op}\circ\beta\in A(L\times L)$$
is non degenerate, i.e. if $\alpha\op\circ\beta=0$ for all $\alpha\in A(H\times L)$, then $\beta=0$.
\end{enumerate}
Then $A$ is a semisimple Green biset functor. 
\end{prop}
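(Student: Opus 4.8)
The plan is to reduce everything to Lemma~\ref{lemma-semisimple}, so it will be enough to show that the representable $A$-module $A_H$ --- the functor $G\mapsto A(G\times H)=\Hom_{\CP_A}(H,G)$ --- is semisimple for every $H\in\CD$. Fix $H$. First I would use hypothesis~1, applied with $L=H$, to write $A(H\times H)\cong\bigoplus_i V_i^{n_i}$ as a left module over itself, the $V_i$ being the distinct simple $A(H\times H)$-modules. Since all structure maps of $A_H$ are given by composition in $\CP_A$, the canonical isomorphism $A(G\times H)\otimes_{A(H\times H)}A(H\times H)\cong A(G\times H)$, $a\otimes c\mapsto a\circ c$, is natural in $G$, hence induces an isomorphism of $A$-modules $A_H\cong\bigoplus_i L_{H,V_i}^{\,n_i}$. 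So it suffices to prove that $L_{H,V}$ is simple whenever $V$ is a simple $A(H\times H)$-module; and since $S_{H,V}=L_{H,V}/J_{H,V}$ is simple, this comes down to showing $J_{H,V}=0$.

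The heart of the argument is to translate the condition defining $J_{H,V}(G)$ into the non-degeneracy in hypothesis~2. Given $\xi=\sum_k a_k\otimes v_k\in J_{H,V}(G)$ with $a_k\in A(G\times H)$ and $v_k\in V$, I would use semisimplicity of $A(H\times H)$ to choose a primitive idempotent $e\in A(H\times H)$ with $V\cong A(H\times H)\circ e$; identifying $V$ with $A(H\times H)\circ e$, writing $v_k=c_k\circ e$ and setting $x=\bigl(\sum_k a_k\circ c_k\bigr)\circ e\in A(G\times H)$, the resulting identification $L_{H,V}(G)=A(G\times H)\otimes_{A(H\times H)}\bigl(A(H\times H)\circ e\bigr)\cong A(G\times H)\circ e$ sends $\xi$ to $x$. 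Under it, the relation ``$\sum_k(b\circ a_k)v_k=0$ for all $b\in A(H\times G)$'' defining membership in $J_{H,V}(G)$ becomes simply ``$b\circ x=0$ for all $b\in A(H\times G)$''. Now I would apply hypothesis~2 with $H$ replaced by $G$ and $L$ replaced by $H$: the pairing $(\alpha,\beta)\in A(G\times H)^2\mapsto\alpha\op\circ\beta\in A(H\times H)$ is non-degenerate, and since $\alpha\mapsto\alpha\op$ is a bijection of $A(G\times H)$ onto $A(H\times G)$, this says precisely that ``$b\circ x=0$ for all $b\in A(H\times G)$'' forces $x=0$. Hence $\xi=0$, so $J_{H,V}(G)=0$ for every $G$, whence $J_{H,V}=0$ and $L_{H,V}=S_{H,V}$ is simple. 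Therefore $A_H\cong\bigoplus_i L_{H,V_i}^{\,n_i}$ is semisimple for every $H$, and $A$ is a semisimple Green biset functor by Lemma~\ref{lemma-semisimple}.

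The step I expect to be the main obstacle is the translation in the second paragraph: one has to describe $L_{H,V}(G)$ and its subfunctor $J_{H,V}(G)$ correctly in terms of a single element of $A(G\times H)$, keeping careful track of the fact that the semisimple ring $A(H\times H)$ acts throughout by composition in $\CP_A$, and of which instance of hypothesis~2 (the one with the roles of the two groups interchanged and the second one specialised to $H$) is the relevant one. The remaining manipulations are formal bookkeeping.
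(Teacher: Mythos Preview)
Your argument is correct and takes a genuinely different route from the paper. The paper fixes $L$, and rather than decomposing $A_L$ explicitly, it sets up a pair of order-preserving maps $\Phi:M\mapsto M(L)$ and $\Psi:V\mapsto\langle V\rangle$ between $A$-submodules of $A_L$ and left ideals of $A(L\times L)$, and shows they are mutually inverse bijections: hypothesis~2 is used to prove that any submodule $M$ with $M(L)=\zero$ must vanish, and hypothesis~1 provides a complementary left ideal $W$ to $M(L)$, from which one gets $M\oplus\langle W\rangle=A_L$ and $\Psi\Phi(M)=M$. Your approach instead uses the $L_{H,V}/J_{H,V}/S_{H,V}$ machinery recalled in the preliminaries: you decompose $A_H\cong\bigoplus_i L_{H,V_i}^{\,n_i}$ via semisimplicity of $A(H\times H)$, then kill each $J_{H,V_i}$ using the non-degeneracy, after identifying $L_{H,V}(G)$ with $A(G\times H)\circ e$. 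The paper's argument yields slightly more---a full poset isomorphism between submodules of $A_L$ and left ideals of $A(L\times L)$---while yours has the advantage of identifying the simple summands of $A_H$ explicitly as the $S_{H,V_i}$ and of reusing structures already set up in the paper. Your bookkeeping (in particular the instance of hypothesis~2 you invoke, with the roles of the two groups interchanged) is correct.
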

\begin{proof}
We will show that $A_L$ is a semisimple $A$-module, for each finite group $L$. Let $M$ be an $A$-submodule of $A_L$. 
\begin{itemize}
\item First, $M(L)$ is a left ideal of the algebra $A(L\times L)$, and we claim that if $M(L)=\zero$, then $M=0$. Indeed, if $M(L)=\zero$, then for any finite group $H$, the $k$-vector space $M(H)$ is contained in the set of elements $\beta$ of $A_L(H)=A(H\times L)$ such that $\alpha\op\circ\beta=0$ for each $\alpha\in A(H\times L)$ (since $\alpha\op\circ \beta\in M(L)=\zero$). By assumption 2, we have $M(H)=\zero$, as claimed. 
\item Now let $\Phi$ be the correspondence sending an $A$-submodule $M$ of $A_L$ to the (left) ideal $M(L)$ of the algebra $A(L\times L)$. In the other direction, let $\Psi$ be the correspondence sending a left ideal $V$ of $A(L\times L)$ to the $A$-submodule $\langle V\rangle$ of $A_L$ generated by $V$.\par
Clearly $\Phi\circ\Psi(V)=V$, since $\langle V\rangle(L)=A(L\times L)(V)=V$.\par
  Conversely, let $M$ be an $A$-submodule of $A_L$. Then $M(L)$ is a left ideal of $A(L\times L)$, which is semisimple by assumption 1. Then there exists a left ideal  $W$ of $A(L\times L)$ such that
$$M(L)\oplus W=A(L\times L).$$
Let $P=M+\Psi(W)$. The evaluation of $P$ at $L$ is 
$$P(L)=M(L)+W=A(L\times L).$$
Then $P=A_L$, because $A_L$ is generated by $A_L(L)=A(L\times L)$.  Moreover, the intersection $I=M\cap\Psi(W)$, evaluated at $L$, is equal to
$$I(L)=M(L)\cap W=\zero.$$
It follows that $I=\zero$, and then $M\oplus \Psi(W)=A_L$.  \par
Now consider $M'=\Psi\circ\Phi(M)$. This is an $A$-submodule of $M$, and the same arguments show that $M'\oplus \Psi(W)=A_L$ also. But we have
$$M'\leq M\leq M'\oplus \Psi(W),$$
and then $M=M'\oplus\big(M\cap \Psi(W)\big)=M'\oplus I=M'$.\par
We have shown that $\Phi$ and $\Psi$ are mutually inverse bijections between the poset of $A$-submodules of $A_L$ and the poset of left ideals of $A(L\times L)$. It follows that $A_L$ is a semisimple $A$-module, for each finite group $L$. By Lemma~\ref{lemma-semisimple}, $A$ is semisimple.
\end{itemize}
\end{proof}


\begin{coro} \label{ALL semisimple}Let $A$ be a Green field. Then $A$ is semisimple if and only if the algebra $A(L\times L)$ is semisimple for any $L\in\CD$.
\end{coro}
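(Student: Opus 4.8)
The plan is to prove the two implications of the corollary separately; one direction is an application of Proposition~\ref{implies semisimple}, the other a Yoneda-type argument.

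For ``$A(L\times L)$ semisimple for all $L\in\CD$ $\Rightarrow$ $A$ semisimple'', I would invoke Proposition~\ref{implies semisimple}: its hypothesis~1 is precisely the standing assumption, so the only thing to verify is hypothesis~2, namely that the pairing $(\alpha,\beta)\in A(H\times L)^2\mapsto\alpha\op\circ\beta\in A(L\times L)$ is non degenerate for all $H,L\in\CD$. This is where being a Green field enters. By Lemma~\ref{bilinear maps}, $\langle\alpha,\beta\rangle_{H,L}=A\big(\Def_\un^{\Delta(L)}\Res_{\Delta(L)}^{L\times L}\big)(\alpha\op\circ\beta)$, so $\langle\alpha,\beta\rangle_{H,L}$ is the image of $\alpha\op\circ\beta$ under a fixed $k$-linear map $A(L\times L)\to A(\un)$; hence if $\alpha\op\circ\beta=0$ for all $\alpha\in A(H\times L)$, then $\beta$ lies in the radical of $\langle-,-\rangle_{H,L}$, which is zero by the implication $1\Rightarrow 3$ of Proposition~\ref{non degenerate}. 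Thus $\beta=0$, both hypotheses of Proposition~\ref{implies semisimple} hold, and $A$ is semisimple.

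For the converse, assume $A$ is semisimple and fix $L\in\CD$, so that $A_L$ is a semisimple $A$-module. Viewed as the representable functor $\Hom_{\CP_A}(L,-)$, the module $A_L$ is generated by the identity morphism $\varepsilon_L\in A_L(L)=A(L\times L)$, hence is cyclic; and a cyclic semisimple module is a \emph{finite} direct sum of simple submodules, since the generator already lies in a finite partial sum. Writing $A_L\cong\bigoplus_{i=1}^n S_i$, grouping the $S_i$ by isomorphism class and applying Schur's lemma in the abelian category $\gMod{A}$ shows that $\End_{\gMod{A}}(A_L)$ is a finite product of matrix algebras over division algebras, hence a semisimple algebra; by the Yoneda lemma it is isomorphic to $A(L\times L)\op$, so $A(L\times L)$ is semisimple. (Alternatively one can stay inside the proof of Proposition~\ref{implies semisimple}: the maps $\Phi\colon M\mapsto M(L)$ and $\Psi\colon V\mapsto\langle V\rangle$ always satisfy $\Phi\Psi=\Id$, and when $A$ is a Green field and $A_L$ is semisimple they become mutually inverse lattice isomorphisms --- write $M=\langle M(L)\rangle\oplus P$, note $P(L)=0$, and conclude $P=0$ by the first bullet of that proof --- whence, $A_L$ being semisimple, every left ideal of $A(L\times L)$ is a direct summand of the regular module and $A(L\times L)$ is semisimple.)

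I expect the main obstacle to be conceptual rather than computational: recognising that the non degeneracy hypothesis~2 of Proposition~\ref{implies semisimple} is automatic for a Green field once one feeds the identity of Lemma~\ref{bilinear maps} into the non degeneracy part of Proposition~\ref{non degenerate}. The remaining points are routine, though two deserve a line of care: that $A_L$ is genuinely cyclic (so that ``semisimple'' yields a \emph{finite} sum of simples, without which the endomorphism algebra need not be semisimple), and that the Yoneda identification matches the algebra structures only up to $\op$, which does not affect semisimplicity.
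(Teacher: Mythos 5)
Your proposal is correct and follows essentially the same route as the paper: one direction verifies condition~2 of Proposition~\ref{implies semisimple} by combining Lemma~\ref{bilinear maps} with the non-degeneracy statement in Proposition~\ref{non degenerate}, and the other identifies $A(L\times L)$ (up to opposite) with $\End_{\gMod{A}}(A_L)$ via Yoneda and uses that $A_L$ is semisimple. The only difference is that you spell out a detail the paper takes for granted --- that $A_L$ is cyclic, hence decomposes as a \emph{finite} direct sum of simples, so that Schur's lemma really does yield semisimplicity of the endomorphism algebra --- which is a genuine (if routine) point worth keeping.
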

\begin{proof} Indeed if $A$ is semisimple, then $A_L$ is a semisimple $A$-module for any $L\in \CD$, so $\End_{A\Mod}(L)\cong A(L\times L)$ is a semisimple algebra. Conversely, by Lemma \ref{bilinear maps} and Proposition \ref{non degenerate}, a Green field always fulfills condition 2 of Proposition~\ref{implies semisimple}. Hence for a Green field, condition 1 alone implies semisimplicity.
\end{proof}

\begin{ejem} 
\label{trivsou}
Let $k$ be a field of characteristic $p>0$. The Green ring functor $a_k$ is the Green biset functor (over $\Z$) sending a finite group $G$ to the Grothendieck group of the category of finitely generated $kG$-modules, for relations given by direct sum decompositions. For finite groups $G$ and $H$, the product $\times:A(G)\times A(H)\to A(G\times H)$ is induced by the usual external tensor product: if $M$ is a $kG$-module and $N$ is a $kH$-module, then $M\otimes_kN$ has a natural structure of $k(G\times H)$-module.\par
For three groups $G$, $H$, and $K$, the composition $a_k(K\times H)\times a_k(H\times G)\to a_k(K\times G)$ is induced by the tensor product of bimodules: a $k(K\times H)$-module $N$ can be viewed as a $(kK,kH)$-bimodule, and a $k(H\times G)$-module $M$ can be viewed as a $(kH,kG)$-bimodule. Then the composition of (the class of) $N$ and (the class of) $M$ in $a_k(K\times G)$ is (the class of) $N\otimes_{kH}M$.\par
 Now let $\F$ be a field of characteristic 0 and $\F a_k=\F\otimes a_k$ be the Green biset functor obtained by extending the coefficients to $\F$. Then $\F a_k(\un)$ is the field $\F$. Moreover, it follows from Corollary 5.11.2 of \cite{benson} that the bilinear form $\langle-,-\rangle_{H,\un}$ is non degenerate, for any finite group $H$. By Proposition~\ref{non degenerate}, the functor $\F a_k$ is a Green field. For short, we could say that {\em the Green ring is a Green field}.\par
This example shows that the question of semisimplicity of Green fields is not that simple: we don't know if the algebra $\big(\F a_k(G\times G),\circ\big)$ is always semisimple, and we leave this question open. We recall that, on the other hand, the commutative algebra $\big(a_k(G),\cdot\big)$ is non reduced in general - that is, its nilradical is non zero (see Section 5.8 of~\cite{benson} for details).
\end{ejem}

We finish the section with an application of the previous corollary.

\begin{coro} \label{anisotropic}Let $A$ be a Green field. Assume that $\dim_{A(\un)}A(L\times L)<+\infty$ and that the bilinear form $\langle\,,\,\rangle_{L,L}$ is anisotropic, for any $L$ in $\CD$. Then the functor $A$ is semisimple.
\end{coro}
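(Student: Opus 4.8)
The plan is to derive this as a direct consequence of Corollary~\ref{ALL semisimple}. By that corollary, it suffices to show that under the hypotheses --- namely that $A(L\times L)$ is a finite dimensional $A(\un)$-algebra and that the symmetric bilinear form $\langle-,-\rangle_{L,L}$ is anisotropic for every $L\in\CD$ --- the algebra $A(L\times L)$ is semisimple. So the whole problem reduces to a purely ring-theoretic statement: a finite dimensional algebra over a field, equipped with an associative symmetrizing form (in the sense of the Remark after Proposition~\ref{non degenerate}) whose associated quadratic form is anisotropic, is semisimple.

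First I would fix $L$ and write $R=A(L\times L)$, $\kappa=A(\un)$, which by Lemma~\ref{isfield} is a field. The form $\langle-,-\rangle_{L,L}$ on $R$ is symmetric (by the Remark on commutative Green biset functors), associative in the sense $\langle xy,z\rangle=\langle x,yz\rangle$ (this is the special case $K=\un$, $H=L$, with $\alpha,\gamma,\beta$ suitably placed, of Lemma~\ref{associative}, after identifying the $\cdot$-product on $A(L\times L)$ with composition as in the proof of Lemma~\ref{bilinear maps} --- or one simply invokes that $t_H$ is a symmetrizing form for $A(H)$, which is exactly what the Remark after Proposition~\ref{non degenerate} records in the finite dimensional case). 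The key classical fact is: for a finite dimensional algebra $R$ with a non-degenerate associative bilinear form, the Jacobson radical $J(R)$ is precisely the radical of the form, so $R$ is semisimple iff the form is non-degenerate. Here the form is not assumed non-degenerate a priori --- only anisotropic. But anisotropic immediately forces non-degeneracy: if $x$ lies in the radical $\Rad\langle-,-\rangle_{L,L}$, then in particular $\langle x,x\rangle=0$, and anisotropy gives $x=0$. Hence the form is non-degenerate, so by the classical fact $R=A(L\times L)$ is semisimple.

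Then I would conclude: since $A(L\times L)$ is semisimple for every $L\in\CD$, Corollary~\ref{ALL semisimple} applies and gives that $A$ is semisimple. I should note that one does not even need the full strength of Proposition~\ref{implies semisimple} directly, since Corollary~\ref{ALL semisimple} has already packaged the fact that a Green field automatically satisfies the non-degeneracy condition 2 of that proposition (via Lemma~\ref{bilinear maps} and Proposition~\ref{non degenerate}); only condition 1, the semisimplicity of each $A(L\times L)$, remains to be checked, and that is what the argument above supplies.

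The main obstacle --- really the only point needing care --- is justifying the classical fact that a finite dimensional algebra carrying a non-degenerate associative bilinear form has radical equal to the form's radical. One clean way: the radical $N$ of the form is a two-sided ideal by associativity and symmetry of the form; $R/N$ then carries an induced non-degenerate associative form, and a standard argument (using that a nilpotent ideal is orthogonal to all of $R$ under an associative form, since $\langle xy^m,\cdots\rangle$ can be pushed around until it meets $y^m=0$) shows $R/N$ is semisimple, while $N$ is nilpotent because on a finite dimensional algebra the form's radical, being orthogonal to everything, forces any element of it to act nilpotently --- alternatively one simply cites the standard result on symmetric algebras (e.g.\ \cite{broue_higman}) that $J(R)=\Rad\langle-,-\rangle$. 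Given the Remark after Proposition~\ref{non degenerate} already invokes \cite{broue_higman} for exactly this circle of ideas, I would phrase the proof as: by anisotropy the symmetrizing form $\langle-,-\rangle_{L,L}$ is non-degenerate, hence $A(L\times L)$ is a semisimple (symmetric) algebra, and Corollary~\ref{ALL semisimple} finishes the argument.
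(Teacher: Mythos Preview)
Your overall strategy---reduce to Corollary~\ref{ALL semisimple} and argue that each $A(L\times L)$ is semisimple---matches the paper. The gap is in the ring-theoretic step. The ``classical fact'' you invoke, that a finite dimensional algebra with a non-degenerate associative symmetric form has Jacobson radical equal to the radical of the form (hence is semisimple once the form is non-degenerate), is \emph{false}. A symmetric algebra need not be semisimple: the group algebra $kG$ with $\mathrm{char}\,k$ dividing $|G|$ carries the non-degenerate symmetrizing form $\langle\sum_g a_g g,\sum_h b_h h\rangle=\sum_g a_g b_{g^{-1}}$, yet $J(kG)\neq 0$. Your attempted justification (``a nilpotent ideal is orthogonal to all of $R$ under an associative form'') fails in exactly this example: for $g$ of $p$-power order the element $1-g$ is nilpotent, but $\langle 1-g,\,g^{-1}\rangle=-1\neq 0$. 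So semisimplicity cannot be deduced from non-degeneracy alone; anisotropy must do more than merely force non-degeneracy.

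The paper uses anisotropy directly. For a left ideal $I$ of $R=A(L\times L)$ (with respect to the composition product $\circ$), the left orthogonal
\[
J=\{u\in R\mid \langle u,v\rangle_{L,L}=0\ \text{for all }v\in I\}
\]
is again a left ideal by Lemma~\ref{associative}. Anisotropy gives $I\cap J=\{0\}$, since $x\in I\cap J$ forces $\langle x,x\rangle_{L,L}=0$. Non-degeneracy of the form (from Proposition~\ref{non degenerate}, or equally from anisotropy itself) together with the finite-dimensionality hypothesis gives $\dim_{A(\un)}I+\dim_{A(\un)}J=\dim_{A(\un)}R$. Hence $R=I\oplus J$, every left ideal splits off, and $R$ is semisimple. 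That complementation argument is where anisotropy does its real work, and your proposal bypasses it.
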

\begin{proof} By Corollary~\ref{ALL semisimple}, it suffices to show that the algebra $A(L\times L)$ is semisimple, for any $L\in \CD$. Let $L\in \CD$ and $I$ be a left ideal of $A(L\times L)$. Let $J$ be the left orthogonal of $I$ in $A(L\times L)$, that is
$$J=\{u\in A(L\times L)\mid \forall v\in I,\;\langle u,v\rangle_{L,L}=0\}.$$
Then $J$ is a left ideal of $A(L\times L)$, by Lemma~\ref{associative}. Moreover $I\cap J=\{0\}$ since the form $\langle\,,\,\rangle_{L,L}$ is anisotropic. In addition $\dim_{A(\un)}I+\dim_{A(\un)}J=\dim_{A(\un)}A(L\times L)$ since the form $\langle\,,\,\rangle_{L,L}$ is non-degenerate. It follows that $I\oplus J=A(L\times L)$, so the algebra $A(L\times L)$ is semisimple, as was to be shown.
\end{proof}

\begin{ejem} Let $A=\Q R_\Q$. Then for any $L\in\CD$ and any finite dimensional $\Q(L\times L)$-modules $M$ and $N$, one checks easily that $\langle M,N\rangle_{L,L}$ is equal to the dimension of the space of co-invariants $(M\otimes_\Q N)_{L\times L}$ of $L\times L$ on the tensor product $M\otimes_\Q N$. This is also equal to the dimension of the space  $(M\otimes_\Q N)^{L\times L}$, and thus can be computed using the characters $\chi_M$ and $\chi_N$ of $M$ and $N$, respectively, as
$$\langle M,N\rangle_{L,L}=\frac{1}{|L|^2}\sum_{g,h\in L}\chi_M(g,h)\chi_N(g,h).$$
Now the map $M\mapsto \chi_M$ from $R_\Q(L\times L)$ to the space $cf_{\Q}(L\times L)$ of class functions $L\times L\to \Q$ extends to a linear injective map $u\mapsto \chi_u$ from $\Q R_\Q(L\times L)$ to $cf_{\Q}(L\times L)$, and for any $u,v\in \Q R_\Q(L\times L)$, we get that
$$\langle u,v\rangle_{L,L}=\frac{1}{|L|^2}\sum_{g,h\in L}\chi_u(g,h)\chi_v(g,h).$$
In particular $\langle u,u\rangle_{L,L}\geq 0$, with equality if and only if $\chi_u=0$, that is $u=0$. In other words the bilinear form $\langle \,,\,\rangle_{L,L}$ is anisotropic, and Corollary~\ref{anisotropic} can be applied. This gives another proof of the fact that the Green biset functor $\Q R_\Q$ is semisimple - this was first proved by L. Barker in~\cite{barker}. Variations on this argument were used in~\cite{shifted}, to show that all the shifted functors $(kR_\F)_T$ are semisimple, when $k$ and $\F$ are fields of characteristic 0, and $T$ is any finite group.
\end{ejem}


\section{Strict Green fields}
\label{estrictos}

Recall that we have fixed a  non-empty class $\mathcal{D}$ of finite groups, closed  under subquotients and direct products. In particular, the trivial group belongs to $\mathcal{D}$. We denote by $A$ a Green biset functor defined on $\mathcal{D}$, over a commutative unital ring $k$. 

\begin{teo}
\label{equiv1}
The following conditions are equivalent:
\begin{enumerate}
\item If $M$ is a non-zero $A$-module, then $M(\un)\neq \zero$.
\item If $S$ is a simple $A$-module, then $S(\un)\neq\zero$.
\item If $S$ is a simple $A$-module, then $S\cong S_{\un,V}$, where $V$ is a simple $A(\un)$-module.
\item For any finite group $H$ in $\mathcal{D}$, there exists a positive integer $n_H$ such that $A_H$ is a direct summand of $A^{\oplus n_H}$.
\item The evaluation functor, which sends an $A$-module $M$ to $M(\un)$, is an equivalence of categories between  $\gMod{A}$ and $\gMod{A(\un)}$.
\item For any finite groups $G$ and $H$ in $\mathcal{D}$, the product $\times:A(G)\times A(H)\to A(G\times H)$
induces an isomorphism of $k$-modules between  $A(G)\otimes_{A(\un)}A(H)$ and $A(G\times H)$.
\end{enumerate}
\end{teo}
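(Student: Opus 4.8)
The plan is to prove the cycle of implications $1\Rightarrow 2\Rightarrow 3\Rightarrow 4\Rightarrow 6\Rightarrow 5\Rightarrow 1$, working throughout inside the equivalence of $\gMod{A}$ with the category of $k$-linear functors $\mathcal{P}_A\to\gMod{k}$. Under this equivalence $A_H$ is the representable functor $\Hom_{\mathcal{P}_A}(H,-)$, so $A_H$ is projective, $\Hom_{\gMod{A}}(A_H,M)\cong M(H)$ naturally (Yoneda), the identity $\varepsilon_H\in A_H(H)=A(H\times H)$ generates $A_H$ as an $A$-module, and $A=A_\un$. The evaluation functor $\mathrm{ev}_\un\colon\gMod{A}\to\gMod{A(\un)}$ is exact and cocontinuous, and has a left adjoint $F\colon W\mapsto\big(G\mapsto A(G)\otimes_{A(\un)}W\big)$ (this is a well-defined $A$-module since, as recalled in Section~3, every structure map of an $A$-module is $A(\un)$-linear); I will write $\kappa\colon F\circ\mathrm{ev}_\un\Rightarrow\mathrm{Id}$ for the counit, whose evaluation $\kappa_{A_H}$ at $A_H$ is, at each group $G$, essentially the map $\pi_{G,H}$ of condition~6, and I note that the unit of $F\dashv\mathrm{ev}_\un$ is always the canonical isomorphism $W\xrightarrow{\sim}(A\otimes_{A(\un)}W)(\un)$. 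With this in place, $1\Rightarrow 2$ is trivial; $2\Rightarrow 3$ is immediate from the facts recalled in Section~2, since for a simple $S$ with $S(\un)\neq 0$ the module $V:=S(\un)$ is a simple $A(\un)$-module (recall $A(\un\times\un)\cong A(\un)$) and $S\cong S_{\un,V}$; and $5\Rightarrow 1$ holds because an equivalence of categories reflects zero objects.

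The heart of the argument is $3\Rightarrow 4$. I first record the following consequence of condition~3 (equivalently of $2$): any nonzero $A$-module $Q$ has $Q(\un)\neq 0$. Indeed, for $0\neq q\in Q(G)$ the cyclic submodule $\langle q\rangle$ has a maximal proper submodule (a submodule of $\langle q\rangle$ is proper exactly when it misses $q$, so Zorn applies), hence a simple quotient $S$; if $Q(\un)=0$ then $S(\un)$, being a quotient of a submodule of $Q(\un)=0$, is $0$, contradicting $S\cong S_{\un,S(\un)}$. Now fix $H\in\mathcal{D}$ and let $N\leq A_H$ be the $A$-submodule generated by $A_H(\un)=A(\un\times H)$, so that $N(G)$ is the set of finite sums $\sum_k a_k\circ b_k$ with $a_k\in A(G\times\un)$, $b_k\in A(\un\times H)$. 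Since $\varepsilon$ acts as the identity, $N(\un)=A(\un\times H)=A_H(\un)$, hence $(A_H/N)(\un)=0$ and therefore $A_H/N=0$ by the remark above, i.e.\ $N=A_H$. In particular $\varepsilon_H\in A_H(H)=N(H)$, so there is a \emph{finite} expression $\varepsilon_H=\sum_{k=1}^{n_H}\alpha_k\circ\beta_k$ with $\alpha_k\in A(H\times\un)$ and $\beta_k\in A(\un\times H)$. Reading $\alpha_k,\beta_k$ via Yoneda as morphisms $\widehat{\alpha_k}\colon A_H\to A_\un$ and $\widehat{\beta_k}\colon A_\un\to A_H$, one checks that $\sum_k\widehat{\beta_k}\circ\widehat{\alpha_k}$ corresponds under $\Hom_{\gMod{A}}(A_H,A_H)=A(H\times H)$ to $\sum_k\alpha_k\circ\beta_k=\varepsilon_H=\mathrm{id}_{A_H}$; hence $(\widehat{\alpha_k})_k$ and $(\widehat{\beta_k})_k$ exhibit $A_H$ as a direct summand of $A^{\oplus n_H}$, which is condition~4.

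For $4\Rightarrow 6$: by condition~4 the module $A_H$ is a retract of $A^{\oplus n_H}$; since $\kappa_{A_\un}\colon A\otimes_{A(\un)}A(\un)\to A_\un$ is the canonical isomorphism and $\kappa$ is additive, $\kappa_{A^{\oplus n_H}}$ is an isomorphism, and naturality of $\kappa$ with respect to the splitting $A_H\hookrightarrow A^{\oplus n_H}\twoheadrightarrow A_H$ exhibits $\kappa_{A_H}$ as a retract of $\kappa_{A^{\oplus n_H}}$, hence an isomorphism. Evaluating at $G$, this says that $\pi_{G,H}$ is bijective for all $G,H$. (Alternatively, one verifies directly that $x\mapsto\sum_k(x\circ\alpha_k)\otimes\beta_k$ is a two-sided inverse of $\pi_{G,H}$, using $\varepsilon_H=\sum_k\alpha_k\circ\beta_k$ and the identity $\gamma=\sum_k(\gamma\circ\alpha_k)\cdot\beta_k$ for $\gamma\in A(\un\times H)$, which is the $\un$-evaluation of the same retraction.) For $6\Rightarrow 5$: condition~6 says $\kappa_M$ is an isomorphism for $M$ a representable $A_H$, hence for any finite direct sum of representables. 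Given an arbitrary $A$-module $M$, choose a presentation $P_1\to P_0\to M\to 0$ with $P_0,P_1$ direct sums of representables; applying the right-exact cocontinuous functor $F\circ\mathrm{ev}_\un$ and using naturality of $\kappa$ gives a map of exact sequences whose first two vertical components ($\kappa_{P_1},\kappa_{P_0}$) are isomorphisms, so $\kappa_M$ is an isomorphism by the five lemma. Thus the counit of $F\dashv\mathrm{ev}_\un$ is a natural isomorphism; as the unit already is, $F$ and $\mathrm{ev}_\un$ are mutually quasi-inverse, i.e.\ $\mathrm{ev}_\un$ is an equivalence $\gMod{A}\to\gMod{A(\un)}$, which is condition~5.

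The step I expect to require the most care is $3\Rightarrow 4$. Conditions $1$--$3$ by themselves only force $A_H$ to be generated by its (possibly infinite-dimensional) evaluation at $\un$; the essential extra input is that $A_H$ is \emph{cyclic} as an $A$-module, generated by $\varepsilon_H$, which is what turns "generated by $A_H(\un)$" into a \emph{finite} expression $\varepsilon_H=\sum_{k}\alpha_k\circ\beta_k$ and hence allows one to split $A_H$ off a \emph{finite} power of $A$. The other delicate bookkeeping occurs in $4\Rightarrow 6$, where the abstract counit $\kappa_{A_H}$ must be matched with the concrete map $\pi_{G,H}$ through the identifications $A(G\times\un)\cong A(G)$, $A(\un\times H)\cong A(H)$ and "composition through $\un$ equals the external product $\times$"; this is routine but should be done carefully.
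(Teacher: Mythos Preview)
Your proof is correct and follows essentially the same global architecture as the paper: the chain of implications through $1,2,3,4$, then an adjunction argument for the equivalence with $\gMod{A(\un)}$, closing the loop via $5\Rightarrow 1$. The steps $1\Rightarrow 2\Rightarrow 3$, the counit-on-representables argument for $4\Rightarrow 6\Rightarrow 5$, and $5\Rightarrow 1$ are the same as the paper's (only reordered: the paper goes $4\Rightarrow 5\Leftrightarrow 6$ instead of $4\Rightarrow 6\Rightarrow 5$, using Lemma~\ref{section} where you invoke naturality of retracts directly).

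The genuine difference is your treatment of $3\Rightarrow 4$. The paper argues via the essential algebras $\widehat{A}(H)$: if $\widehat{A}(H)\neq 0$ for some non-trivial $H$, one builds a simple $S_{H,V}$ with $H$ minimal, hence $S(\un)=0$, contradicting~3; so every $\varepsilon_H$ factors through strictly smaller groups, and an induction on $|H|$ produces the splitting. You instead first extract $3\Rightarrow 1$ by a direct Zorn/simple-quotient argument, then apply it to the quotient $A_H/N$ where $N$ is the submodule generated by $A_H(\un)$; this forces $\varepsilon_H\in N(H)$, i.e.\ a \emph{single-step} factorisation $\varepsilon_H=\sum_k\alpha_k\circ\beta_k$ through $\un$, with no induction and no mention of essential algebras. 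Your route is more elementary and self-contained; the paper's route yields the extra structural information that $\widehat{A}(H)=0$ for all non-trivial $H$, which connects the statement to the theory of minimal groups for simple $A$-modules.
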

\begin{proof}
$1\Rightarrow 2{:}$ This is trivial, since a simple $A$-module is non-zero by definition.\mpn
$2\Rightarrow 3{:}$ If $S$ is a simple $A$-module and $H$ is a finite group such that $S(H)\neq\zero$, then $V=S(H)$ is a simple $A(H\times H)$-module, and $S\cong S_{H,V}$. If 2 holds, we can take $H=\un$, and this gives 3.\mpn
$3\Rightarrow 4{:}$ Let $H$ be a finite group such that $\widehat{A}(H)\neq \zero$. Then we can take a simple $\widehat{A}(H)$-module $V$ and construct the simple $A$-module $S_{H,\, V}$, having $H$ as a minimal group. If $H$ is non-trivial, it satisfies $S_{H,\, V}(\un)=0$. So if 3 holds, we have $\widehat{A}(H)=\zero$ for any non-trivial finite group $H$. 
Then, any element in  $A(H\times H)$ can be written as $\sum_{i=1}^na_i\circ b_i$, where $a_i\in A(H\times K_i)$ and $b_i\in A(K_i\times H)$, for some $K_i$ with $|K_i|<|H|$. In particular, using the Yoneda Lemma, the identity $\Id_{A_H}$ in $\mathrm{End}_A(A_H)$ can be written as $\sum_{i=1}^n\beta_i\circ\alpha_i$, where $\alpha_i:A_{H}\rightarrow A_{K_i}$ and $\beta_i:A_{K_i}\rightarrow A_H$, for some $K_i$ with $|K_i|<|H|$. Now the elements $\alpha_i$ define a morphism $\alpha:A_H\to\mathop{\oplus}_{i=1}^{n_H}\limits A_{K_i}$, and the elements $\beta_i$ define a morphism $\mathop{\oplus}_{i=1}^{n_H}\limits A_{K_i}\to A_H$. Saying that $\Id=\sum_i\alpha_i\circ\beta_i$ is equivalent to saying that $\beta\circ\alpha=\Id_{A_H}$, so $A_H$ is a direct summand of $\mathop{\oplus}_{i=1}^{n_H}\limits A_{K_i}$.  Applying this argument now to each $A_{K_i}$ and then proceeding by induction on the order of $H$, it follows that $A_H$ is a direct summand of a finite direct sum of copies of $A_\un\cong A$. Hence 3 implies 4.\mpn
$4\Rightarrow 5{:}$ For an $A$-module $M$, the counit of the adjunction $L_{\un,-}\dashv ev_\un$, at the group $G$ is the map $A(G)\otimes_{A(\un)} M(\un)\to M(G)$ induced by the product $\times$ on $M$. If $M=A$, this map is an isomorphism. Hence  it is an isomorphism if $M$ is a direct summand of a direct sum of a finite number of copies of $A$, by Lemma \ref{section}. In particular, if 4 holds, this map is an isomorphism for any $M=A_H$.  Hence, it is an isomorphism if $M$ is a projective $A$-module. \par
If $M$ is an arbitrary $A$-module, there is a resolution $Q\to P\to M\to 0$ by projective $A$-modules $P$  and~$Q$, and as the functor $M\mapsto L_{\un,M(\un)}$ is right exact, we get a commutative diagram
$$\xymatrix{
L_{\un,Q(\un)}\ar[d]^\cong\ar[r]&L_{\un,P(\un)}\ar[d]^\cong\ar[r]&L_{\un,M(\un)}\ar[d]\ar[r]&0\\
Q\ar[r]&P\ar[r]&M\ar[r]&0
}$$
with exact rows. The two left vertical arrows are isomorphims, so the right vertical arrow is an isomorphism. Hence the counit of the adjunction $L_{\un,-}\dashv ev_\un$ is an isomorphism. Since $ev_\un(L_{\un,V})=L_{\un,V}(\un)\cong V$ for any $A(\un)$-module $V$, the unit of the adjunction $L_{\un,-}\dashv ev_\un$ is also an isomorphism. So the functors $ev_\un$ and $L_{\un,-}$ are quasi-inverse equivalences of categories, and 5 holds.\mpn 
$5\Rightarrow 6{:}$ Let $M=A_H$. If 5 holds, the counit $L_{\un,M(\un)}\to M$ is an isomorphism. At the trivial group, this is precisely the map $A(G)\otimes_{A(\un)}A(H)\to A(G\times H)$ of assertion 6.\mpn
$6\Rightarrow 5{:}$ If 6 holds, then the counit $L_{\un,M(\un)}\to M$ is an isomorphism for any $A$-module of the form $A_H$. Hence it is an isomorphism for any projective $A$-module, hence for any $A$-module, as above. It follows that $ev_\un$ and $L_{\un,-}$ are quasi-inverse equivalences of categories, so 5 holds.\mpn 
$5\Rightarrow 1{:}$ If 5 holds, then $M\cong L_{\un, M(\un)}$ for any $A$-module $M$. Hence $M(\un)\neq \zero$ if $M\neq \zero$.
\end{proof}

\begin{coro} \label{A simple}Let $A$ be a Green biset functor, and suppose that $A$ is a simple $A$-module. The following are equivalent:
\begin{enumerate}
\item The only simple $A$-module, up to isomorphism, is $A$.
\item For any finite group $H$, there exists a positive integer $n_H$ such that $A_H\cong A^{\oplus n_H}$.
\item $A(\un)$ is a division ring, and the evaluation functor $ev_{\un}$ defines an equivalence of categories between $\gMod{A}$ and $A(\un)\Vect$. 
\end{enumerate}
\end{coro}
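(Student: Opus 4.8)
The plan is to prove the cycle of implications $1\Rightarrow 2\Rightarrow 3\Rightarrow 1$, with Theorem~\ref{equiv1} doing most of the work. I would begin by recording one elementary fact: since $A$ is a simple $A$-module it is nonzero, so $\varepsilon_A\neq 0$ and $A(\un)\neq\zero$; moreover $A(H)\neq\zero$ for every $H\in\CD$, because the inflation map $A(\Inf_\un^H)\colon A(\un)\to A(H)$ is split injective, a retraction being $A(\Def_\un^H)$ (since $\Def_\un^H\circ\Inf_\un^H$ is the identity biset of $\un$). This will be needed to exclude a degenerate case below.

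$1\Rightarrow 2$: If the only simple $A$-module up to isomorphism is $A$, then every simple $A$-module $S$ satisfies $S(\un)\cong A(\un)\neq\zero$, so condition~2 of Theorem~\ref{equiv1} holds, and hence all six of its conditions hold. In particular, condition~4 gives, for each finite group $H$, a positive integer $n_H$ such that $A_H$ is a direct summand of $A^{\oplus n_H}$. Now $A^{\oplus n_H}$ is a semisimple $A$-module whose composition factors are all isomorphic to $A$, so the same is true of its direct summand $A_H$; hence $A_H\cong A^{\oplus m_H}$ for some integer $m_H$ with $0\le m_H\le n_H$. Since $A_H(\un)=A(H)\neq\zero$, we have $m_H\ge 1$, which is condition~2. (The converse $2\Rightarrow 1$ is equally direct and I would state it too: a simple $A$-module $S$ is nonzero, so $S(H)\neq\zero$ for some $H$; by the Yoneda lemma $S$ is then a quotient of $A_H$, hence of $A^{\oplus n_H}$, hence of $A$, hence $S\cong A$ since $A$ is simple.)

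$2\Rightarrow 3$: Assuming $A_H\cong A^{\oplus n_H}$ for every $H$, condition~4 of Theorem~\ref{equiv1} holds, hence also its condition~5: the evaluation functor $ev_\un$ is an equivalence between $\gMod{A}$ and $\gMod{A(\un)}$. Under this equivalence the $A$-module $A=A_\un$ corresponds to $ev_\un(A)=A(\un)$ with its regular module structure; since $A$ is a simple $A$-module, $A(\un)$ is therefore simple as a module over itself, i.e. $A(\un)$ is a division ring. Consequently $\gMod{A(\un)}$ is precisely the category $A(\un)\Vect$ of vector spaces over $A(\un)$, and condition~3 follows. $3\Rightarrow 1$: Given an equivalence $ev_\un\colon\gMod{A}\to A(\un)\Vect$ with $A(\un)$ a division ring, equivalences carry simple objects to simple objects; the only simple $A(\un)$-module is $A(\un)$ itself, and $ev_\un(A)=A(\un)$ is exactly this (regular, hence simple) module. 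Hence every simple $A$-module $S$ satisfies $ev_\un(S)\cong A(\un)\cong ev_\un(A)$, so $S\cong A$, which is condition~1.

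There is no serious obstacle: the substantive content is Theorem~\ref{equiv1}, which bridges the structural conditions~1 and~2 with the module-theoretic condition~3. The only auxiliary points are that $A$ vanishes at no finite group (needed to rule out $m_H=0$ in $1\Rightarrow 2$), that a ring which is simple as a module over itself is a division ring, and that the module category of a division ring is its category of vector spaces — all of which are standard.
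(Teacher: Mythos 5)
Your proof is correct and follows essentially the same route as the paper's: the cycle $1\Rightarrow 2\Rightarrow 3\Rightarrow 1$ with Theorem~\ref{equiv1} doing the heavy lifting. A small improvement over the paper's own write-up is your preliminary observation that $A(\Inf_\un^H)$ is split injective, hence $A(H)\neq\zero$, which cleanly rules out the degenerate case $A_H=0$ (i.e.\ $m_H=0$) in the step $1\Rightarrow 2$, a point the paper's phrasing glides over.
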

\begin{proof} $1\Rightarrow 2{:}$ If 1 holds, then $S(\un)\neq\zero$ for any simple $A$-module $S$. So, condition 2 of Theorem~\ref{equiv1} is fulfilled. In particular, the functor $A_H$ is a direct summand of some finite direct sum $A^{\oplus m_H}$. Since $A$ is simple, it follows that $A_H$ is isomorphic to a finite direct sum of copies of $A$.\mpn
$2\Rightarrow 3{:}$ If 2 holds, then condition 5 of Theorem~\ref{equiv1} is fulfilled. In particular, the evaluation functor $ev_\un$ is an equivalence of categories. The image of the simple $A$-module $A$ by this equivalence is the $A(\un)$-module $A(\un)$, so $A(\un)$ is a simple $A(\un)$-module. If follows that $A(\un)$ is a division ring, and 3 holds.\mpn
$3\Rightarrow 1{:}$ If 3 holds, then condition 6 of Theorem~\ref{equiv1} is fulfilled. In particular any simple $A$-module is of the form $S_{\un,V}$, where $V$ is a simple $A(\un)$-module. Since $A(\un)$ is a division ring, the only simple $A(\un)$-module is $V=A(\un)$. Hence $S_{\un,A(\un)}$ is the only simple $A$-module, isomorphic to $A$ since $A$ is a simple $A$-module.
\end{proof}


\begin{defi}
Green fields fulfilling any of the conditions of Corollary \ref{A simple} will be called \textit{strict} Green fields.
\end{defi}

\begin{rem} 
It follows from Corollary~\ref{A simple} that a strict Green field is semisimple.
\end{rem}

\subsection*{Examples}

1) Our first example of Theorem~\ref{equiv1} is the case where $A$ is the functor $(R_\C)_K$ of complex characters, shifted by a finite group $K$.  It is easy to see that Proposition 4.2  in Garc\'ia \cite{garcia-shifted} holds for $A=(R_\C)_K$, showing that $A$ satisfies condition 6 of Theorem~\ref{equiv1}. Since $A(\un)$ is isomorphic to the ring $R_\C(K)$, it follows that the evaluation functor $ev_\un:M\mapsto M(\un)$ is an equivalence of categories between $\gMod{A}$ and $\gMod{R_\C(K)}$.  

More generally, for any commutative ring $k$, we get an equivalence of categories between $\gMod{k(R_\C)_K}$ and $\gMod{kR_\C(K)}$. In particular, when $k$ is a field whose characteristic does not divide the order of $K$, the category $\gMod{k(R_\C)_K}$ is semisimple.  If in addition to this we take $K=\un$, then $kR_\C(\un)=k$. Hence $kR_\C$ satisfies condition 3 of Corollary \ref{A simple} and it is a strict Green field. 

It is worth saying that the fact that $\C R_\C$ satisfies condition 1 of Corollary \ref{A simple} was first shown in Proposition 4.3 of \cite{lachica}. Also, the equivalence by evaluation at $\un$ between $\gMod{\C (R_\C)_K}$ and $\gMod{\C R_\C(K)}$ was first given in Proposition 4.3 of \cite{garcia-shifted}.

\begin{rem} The ring $A(\un)$ may indeed be a division ring, for example by taking $k$ as the quaternion algebra over $\mathbb{Q}$ and $A=kR_{\C}$. As we have defined Green biset functors only over commutative rings, in this case, we must consider $A$ as a Green functor over~$\mathbb{Q}$.
\end{rem}

\noindent2) Let $k$ be a field of prime characteristic $q$. Assume that the class $\mathcal{D}$ consists of finite groups whose orders are products of primes all congruent to 1 mod $q$. Then there  exists a biset functor $\sou{k}$, defined on $\mathcal{D}$, such that $\sou{k}(G)=k$ for any $G\in\mathcal{D}$, and $\sou{k}(U):k\to k$ is multiplication by $|H\dom U|$ for any $G$, $H$ in $\mathcal{D}$ and any finite $(H,G)$-biset~$U$: indeed for any finite $G$-set $X$, the cardinality of each orbit $Gx$ on $X$ is congruent to 1 mod~$q$, so $|G\dom X|\equiv |X|\; (\mathrm{mod}\, q)$. It follows that if $K$ is a group in $\mathcal{D}$, and $V$ is a finite $(K,H)$-biset, then 
$$|K\dom(V\times_HU)|\equiv |V\times_HU|\equiv |V\times U|\equiv |K\dom V||H\dom U| \; (\mathrm{mod}\, q),$$
so $\sou{k}$ is indeed a biset functor over $k$, defined on $\mathcal{D}$. In the case where $\mathcal{D}$ consists of finite $p$-groups, for a fixed prime number $p$ congruent to 1 mod $q$, the functor $\sou{k}$ has been considered in~\cite{both}, Corollary 8.4.\par
Now it is clear that the multiplication in the field $k$ induces a Green functor structure on $\sou{k}$, and condition 6 of Theorem~\ref{equiv1} holds. Again Corollary~\ref{A simple}  also holds and $\sou{k}$ is a strict Green field.
\mpn

\noindent3) Let $p$ be a prime, and $K$ be a non-cyclic $p$-group. The Green biset functor $A=e_K^KkB_K^{(p)}$ considered in Example~\ref{examples of Green fields} is a non strict Green field, because it does not satisfy Condition 6 of Theorem~\ref{equiv1}. Indeed, for a $p$-group $G$, the dimension of $A(G)$ is equal to the number of conjugacy classes of subgroups of $G\times K$ which map surjectively to $K$ by the second projection $G\times K\to K$ (see Section~5.2.2 of~\cite{centros}). In the case $K=C_p\times C_p$ and $G=C_p$, an easy computation shows that $\dim_kA(G)=p^2+1$, whereas $\dim_kA(G\times G)=p^4+p^3+p^2+1$. Since
$$(p^4+p^3+p^2+1)-(p^2+1)^2=p^2(p-1)>0,$$
the functor $A$ does not satisfy Condition 6 of Theorem~\ref{equiv1}.\mpn



\centerline{\rule{5ex}{.1ex}}
\begin{flushleft}
Serge Bouc, CNRS-LAMFA, Universit\'e de Picardie, 33 rue St Leu, 80039, Amiens, France.\\
{\tt serge.bouc@u-picardie.fr}\vspace{1ex}\\
Nadia Romero, DEMAT, UGTO, Jalisco s/n, Mineral de Valenciana, 36240, Guanajuato, Gto., Mexico.\\
{\tt nadia.romero@ugto.mx}
\end{flushleft}

\end{document}